\newtheorem{thm}{Theorem}[section]
\newtheorem{cor}[thm]{Corollary}
\newtheorem{lem}[thm]{Lemma}
\newtheorem{prop}[thm]{Proposition}
\newtheorem{defn}[thm]{Definition}
\newtheorem{rem}[thm]{\bf{Remark}}
\numberwithin{equation}{section}
\def\pn{\par\noindent}
\begin{document}
\vspace{1.3 cm}

%------------------------------------------------------------------------------------%

\title{On the isomorphism problem for central extensions II}
\author{Noureddine Snanou}
\thanks{{\scriptsize
\hskip -0.4 true cm MSC(2010): Primary: 20J05, 20E22; Secondary: 20J06, 20D40.
\newline Keywords: Central extension; Lower isomorphic; Upper isomorphic; $(G)$-isomorphic, Isomorphism problem.}}

%------------------------------------------------------------------------------------%

\begin{abstract}

In this paper, we study the isomorphism problem for central extensions. More precisely, in some new situations, we provide necessary and sufficient conditions for two central extensions to be isomorphic. We investigate the case when the quotient group is simple or purely non-abelian. Furthermore, we characterize isomorphisms leaving the quotient group invariant. Finally, we deal with isomorphisms of central extensions where the kernel group and the quotient group are isomorphic.

\end{abstract}

\maketitle
%------------------------------------------------------------------------------------%

\pagestyle{myheadings}
\markboth{\rightline {\sl \hskip 7 cm N. Snanou }}
         {\leftline{\sl  }}

\bigskip

%------------------------------------------------------------------------------------%

\section{Introduction}	
\vskip 0.4 true cm

The classification of groups in a certain class is one of the most classical problems in group theory. For groups with composition series, the Jordan--H\"{o}lder Theorem states that if we can list all simple groups, and solve the extension problem then we can construct and classify all groups. The classification of simple groups has been achieved in the finite case, hence we need to solve the extension problem. The extension problem for two groups $G_{1}$ and $G_{2}$ is the problem of finding all groups $G$ with $G_{1}$ as a normal subgroup of $G$, and the quotient group $G/G_{1}$ isomorphic to $G_{2}$. Such a group $G$ is called an extension of $G_{1}$ by $G_{2}$ \cite{ROT95}. The classification of extensions with nonabelian kernel group may be found in many texts, but the famous references for these extensions are Schreier's paper \cite{Sch26} and Eilenberg-Mac Lane's paper \cite{EM47II}. In this work, we will focuss on extensions with abelian kernel group. In particular, if $G_{1}$ is a central subgroup of $G$, then we say that $G$ is a central extension of $G_{1}$ by $G_{2}$. For central extensions, an answer to the extension problem has been given by H\"{o}lder and Schreier by using the group cohomology ( see \cite[Theorem 7.59]{ROT95}). However, this answer will not enable us to construct all possible non-isomorphic central extensions of $G_{1}$ by $G_{2}$ (the isomorphism problem). In fact, it is very hard to solve the isomorphism problem, but it has been discussed for some special cases in \cite{Sn20, S-C20, Sn23}. In fact, those results do not concern general isomorphisms, but only those of certain type, namely leaving the kernel group or both the two factors invariant, inducing the identity or a commuting automorphism on the quotient group. In this work, necessary and sufficient conditions for two central extensions of $G_{1}$ by $G_{2}$ to be isomorphic are given in some new situations. More precisely, we study the case when the quotient group is simple or purely non-abelian. Furthermore, we characterize isomorphisms leaving the quotient group invariant, and deal with isomorphisms of central extensions where the kernel group $G_{1}$ and the quotient group $G_{2}$ are isomorphic.

Throughout this paper, we denote by $Z(G)$, $G'$, $Aut(G)$ and $End(G)$, respectively, the center, the derived subgroup, the automorphism group, and the monoid of endomorphisms of $G$. For any two groups $H$ and $K$, let $Hom(H,K)$ denote the set of all homomorphisms from $H$ to $K$.
\section{Central extension}
\vskip 0.4 true cm
In this paper, aspects of group cohomology will be used frequently. Therefore, we recall in this section some basic facts of this theory and fix additional notations and terminology.

Let $1\rightarrow G_{1}{\rightarrow }G{\rightarrow }G_{2}\rightarrow 1$ be a group extension, where for convenience we regard the kernel group $G_{1}$ as a subgroup of $G$ and $G_{2}$ is identified with the quotient group $G/G_{1}$. If $G_{1}$ is a central subgroup of $G$, then we say that $G$ is a central extension of $G_{1}$ by $G_{2}$.
Two central extensions $G$ and $G'$ of $G_{1}$ by $G_{2}$ are equivalent if
and only there is a homomorphism  $\varphi:G\rightarrow G'$ such that the diagram
\begin{equation*}
\begin{array}{ccccccccc}
  1 & \rightarrow & G_{1} & \rightarrow & G & \rightarrow & G_{2} & \rightarrow & 1 \\
    &  & \parallel &  & \text{ \ }\downarrow\varphi &  & \parallel &  &  \\
  1 & \rightarrow & G_{1} & \rightarrow & G' & \rightarrow & G_{2} & \rightarrow & 1
\end{array}
\end{equation*}
commutes.

Let $G_{2}$ be a group which acts trivially on a group $G_{1}$. A 2-cocycle of $G_{2}$ with coefficients in $G_{1}$ is a map $%
\varepsilon :G_{2}\times G_{2}\rightarrow G_{1}$ satisfying the 2-cocycle condition, that is
$$\varepsilon(h,g)\varepsilon (hg,k)=\varepsilon(g,k)\varepsilon (h,gk) \text{ \ for all \ } g, h, k\in G_{2}.$$
We always assume that $\varepsilon$ is normalized, i.e. $\varepsilon(g,1)=\varepsilon(1,g)=1$ for all  $g\in G_{2}$. Note that 2-cocycles are known by factor sets in many books (see for example \cite{EM42, D-F04, BRO82, Mac63, ROT95, W94}).

The set of normalized 2-cocycles of $G_{2}$ with coefficients in $G_{1}$ is denoted by $Z^{2}(G_{2},G_{1})$. The trivial 2-cocycle is the 2-cocycle $c$ with $c(g,h)=1$ for all $g,h\in G_{2}$. Let $\varepsilon_{1}$, $\varepsilon_{2}\in Z^{2}(G_{2},G_{1})$. We write $\varepsilon_{1}\sim\varepsilon_{2}$ and say that $\varepsilon_{1}$ and $\varepsilon_{2}$ are cohomologous, if there is a map $t:G_{2}\to G_{1}$ such that $\varepsilon_{2}(g,h)=t(g)t(h)\varepsilon_{1}(g,h)t(gh)^{-1}$ for all $g$, $h\in G_{2}$. Then $(\sim)$ is an equivalence relation on $Z^{2}(G_{2},G_{1})$. The cohomology class of $\varepsilon\in Z^{2}(G_{2},G_{1})$ is denoted by $[\varepsilon]$. The set of all cohomology classes of $G_{2}$ with coefficients in $G_{1}$ is denoted by $H^{2}(G_{2},G_{1})$ and called the second cohomology of $G_{2}$ with coefficients in $G_{1}$.

From now, $G_{1}$ will always considered an abelian group. Then $Z^{2}(G_{2},G_{1})$ is an abelian group and we have $H^{2}(G_{2},G_{1})=Z^{2}(G_{2},G_{1})/B^{2}( G_{2},G_{1})$ where $B^{2}(G_{2},G_{1})$ is the subgroup of $Z^{2}(G_{2},G_{1})$ which consists of all functions $\psi
:G_{2}\times G_{2}\rightarrow G_{1}$ satisfying that for all $g,h\in G_{2}$:
$\psi (h,g)=\delta ( g)\delta (hg)
^{-1}\delta ( h) $ for some $\delta :G_{2}\rightarrow G_{1}$. The
elements of $B^{2}( G_{2},G_{1}) $ are called 2-coboundaries. The set of all normalized 2-cocycles which are symmetric form a
subgroup of $Z^{2}(G_{2},G_{1})$ and denoted by $SZ^{2}(G_{2},G_{1})$. The famous Schreier theorem says that the central extensions
of $G_{1}$ by $G_{2}$ are classified by the
non-trivial elements of the second cohomology group $H^{2}(G_{2},G_{1})$ with coefficients in $G_{1}$. In particular, a central extension
of $G_{1}$ by $G_{2}$ splits if and only if the corresponding $2$-cocycle is trivial in $H^{2}(G_{2},G_{1})$.

A 2-cocycle $\varepsilon \in Z^{2}(G_{2},G_{1})$ gives rise to a central extension $G=G_{1}\underset
{\varepsilon}{\times }G_{2}$ of $G_{1}$ by $G_{2}$ induced by $\varepsilon$, with group operation given by
\begin{equation*}
( x,y) \underset{\varepsilon}{\bullet }( x',y') =( xx'\varepsilon(y,y'),yy')
\end{equation*}
for all $x$, $x'\in G_{1}$ and $y$, $y'\in G_{2}$. The converse follows from the Schreier theorem, and then a central extension of $G_{1}$ by $G_{2}$ is isomorphic to the group $G_{1}\underset{\varepsilon}{\times }G_{2}$ for some 2-cocycle $\varepsilon \in Z^{2}(G_{2},G_{1})$. We can easily see that the group $G_{1}%
\underset{\varepsilon}{\times }G_{2}$ is abelian if and only if $G_{2}$ is abelian and $\varepsilon
\in SZ^{2}(G_{2},G_{1})$. We know that $G_{1}\underset{\varepsilon}{\times }G_{2} = G_{1}\times G_{2}$ if and only if $\varepsilon=1$. But, it is possible for a central extension of $G_{1}$ by $G_{2}$ induced by a non-trivial 2-cocycle to be isomorphic to the direct product $G_{1}\times G_{2}$ (see Corollaries \ref{Direct1}, \ref{Direct2}).
\section{Preliminary results}
\vskip 0.4 true cm
Let $pr_{i}:G_{1}\underset{\varepsilon}{\times }G_{2}\rightarrow G_{i}$ be the $%
ith$ canonical projection and $t_{i}:G_{i}\rightarrow G_{1}\underset{\varepsilon}{\times }G_{2}$ be the $ith$ canonical injection. Let
$\varphi$ be a group homomorphism from $G_{1}\underset%
{\varepsilon_{1}}{\times }G_{2}$ to $G_{1}\underset{\varepsilon_{2}}{\times }G_{2}$ and set $\varphi
_{ij}=pr_{i}\circ \varphi \circ t_{j}$ where $1\leq i,j\leq 2$. So we can write $\varphi $ in the matrix form: $\varphi =\left(
\begin{array}{cc}
\varphi _{11} & \varphi _{12} \\
\varphi _{21} & \varphi _{22}%
\end{array}%
\right) $. Obviously, we see that $pr_{2}$ and $t_{1}$ are group homomorphisms, then $\varphi _{21}$ is a group homomorphism. Furthermore, we have the following lemmas which we need in the sequel.

\begin{lem}\cite[Lemma 3.1]{S-C20}
\label{class1} Let $\varphi=\left(
\begin{array}{cc}
\varphi _{11} & \varphi _{12} \\
\varphi _{21} & \varphi _{22}%
\end{array}%
\right) $ be a group homomorphism from $G_{1}\underset%
{\varepsilon_{1}}{\times }G_{2}$ to $G_{1}\underset{\varepsilon_{2}}{\times }G_{2}$. Then
\begin{eqnarray}\label{Hom}
\varphi(x,y)=(\varphi_{11}(x)\varphi_{12}( y) \varepsilon _{2}( \varphi
_{21}( x),\varphi_{22}( y)),\text{ }\varphi _{21}(x) \varphi_{22}( y))
\end{eqnarray}
for all $x\in G_{1}$, and $y\in G_{2}$.
\end{lem}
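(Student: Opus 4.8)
The plan is to evaluate $\varphi$ on a general element by splitting $(x,y)$ into a product of an element supported on $G_1$ and one supported on $G_2$, and then applying the homomorphism property of $\varphi$. First I would record the explicit form of the canonical maps, namely $t_1(x)=(x,1)$, $t_2(y)=(1,y)$, $pr_1(x,y)=x$ and $pr_2(x,y)=y$. Using the normalization $\varepsilon_1(1,y)=1$, a direct computation with the group law of $G_1\underset{\varepsilon_1}{\times}G_2$ gives the key decomposition
\[
(x,1)\underset{\varepsilon_1}{\bullet}(1,y)=\bigl(x\cdot 1\cdot\varepsilon_1(1,y),\,1\cdot y\bigr)=(x,y),
\]
valid for all $x\in G_1$ and $y\in G_2$.

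Since $\varphi$ is a homomorphism, applying it to this identity yields
\[
\varphi(x,y)=\varphi(x,1)\underset{\varepsilon_2}{\bullet}\varphi(1,y).
\]
I would then identify the two factors on the right using the definition $\varphi_{ij}=pr_i\circ\varphi\circ t_j$. Because every element of $G_1\underset{\varepsilon_2}{\times}G_2$ is recovered as the pair of its two projections, we obtain $\varphi(x,1)=\bigl(\varphi_{11}(x),\varphi_{21}(x)\bigr)$ and $\varphi(1,y)=\bigl(\varphi_{12}(y),\varphi_{22}(y)\bigr)$.

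Finally I would substitute these into the twisted product rule of $G_1\underset{\varepsilon_2}{\times}G_2$:
\[
\bigl(\varphi_{11}(x),\varphi_{21}(x)\bigr)\underset{\varepsilon_2}{\bullet}\bigl(\varphi_{12}(y),\varphi_{22}(y)\bigr)=\bigl(\varphi_{11}(x)\varphi_{12}(y)\varepsilon_2(\varphi_{21}(x),\varphi_{22}(y)),\,\varphi_{21}(x)\varphi_{22}(y)\bigr),
\]
which is precisely the asserted formula \eqref{Hom}. The argument is essentially a bookkeeping computation, and the only point demanding care is that $t_2$ and $pr_1$ are \emph{not} homomorphisms; consequently the factorization $\varphi(x,y)=\varphi(x,1)\underset{\varepsilon_2}{\bullet}\varphi(1,y)$ must be derived from the group law of the domain together with the multiplicativity of $\varphi$ itself, rather than from any multiplicative property of the canonical injections or projections. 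This is the main (and essentially only) subtlety to monitor.
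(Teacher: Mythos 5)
Your proof is correct and is exactly the standard computation behind this lemma (which the paper imports from \cite{S-C20} without reproving): decompose $(x,y)=(x,1)\underset{\varepsilon_{1}}{\bullet}(1,y)$ via the normalization of $\varepsilon_{1}$, apply $\varphi$, and expand the twisted product in the target. The remark that the factorization must come from multiplicativity of $\varphi$ itself, not of the canonical injections or projections, is the right point of care and is handled correctly.
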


\begin{lem}\cite[Lemma 3.2]{S-C20}
\label{class2}Let $\varphi$ be a set map from $G_{1}\underset{\varepsilon _{1}}{\times
}G_{2}$ to $G_{1}\underset{\varepsilon _{2}}{\times }G_{2}$. Then
$\varphi$ is a group homomorphism if and only if

\begin{eqnarray}\label{Hom3}
\varphi (x,y) \underset{\varepsilon _{2}}{\bullet }
\varphi (x',1)&=&\varphi (xx',y),
\end{eqnarray}
and
\begin{eqnarray}\label{Hom4}
\varphi (x,y) \underset{\varepsilon
_{2}}{\bullet } \varphi ( 1,y')&=&\varphi (
x\varepsilon _{1}(y,y'),yy^{\prime
})
\end{eqnarray}
for all $x$, $x' \in G_{1}$, and $y$, $y'\in G_{2}$.
\end{lem}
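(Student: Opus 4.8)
The plan is to prove the two implications separately by direct computation with the twisted product $\underset{\varepsilon}{\bullet}$, the only external inputs being the normalization $\varepsilon_{i}(g,1)=\varepsilon_{i}(1,g)=1$ of the cocycles and the associativity of the operation $\underset{\varepsilon_{2}}{\bullet}$ on the target group (which holds precisely because $\varepsilon_{2}$ satisfies the $2$-cocycle condition).

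For the forward implication, suppose $\varphi$ is a group homomorphism. First I would record the two elementary products in the domain $G_{1}\underset{\varepsilon_{1}}{\times}G_{2}$. Using $\varepsilon_{1}(y,1)=1$ one obtains $(x,y)\underset{\varepsilon_{1}}{\bullet}(x',1)=(xx',y)$, and directly from the definition one has $(x,y)\underset{\varepsilon_{1}}{\bullet}(1,y')=(x\varepsilon_{1}(y,y'),yy')$. Applying $\varphi$ to each of these identities and using $\varphi(a\underset{\varepsilon_{1}}{\bullet}b)=\varphi(a)\underset{\varepsilon_{2}}{\bullet}\varphi(b)$ yields exactly (\ref{Hom3}) and (\ref{Hom4}), respectively.

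For the converse, assume (\ref{Hom3}) and (\ref{Hom4}) hold for all arguments. The one idea required is a decomposition of $\varphi$ on a general element: setting $y=1$ in (\ref{Hom4}) and using $\varepsilon_{1}(1,y')=1$ gives $\varphi(x',1)\underset{\varepsilon_{2}}{\bullet}\varphi(1,y')=\varphi(x',y')$. With this in hand I would compute, for arbitrary $x,x'\in G_{1}$ and $y,y'\in G_{2}$, the product $\varphi(x,y)\underset{\varepsilon_{2}}{\bullet}\varphi(x',y')$ by substituting this decomposition, re-associating in the target group, applying (\ref{Hom3}) to the first pair, and then applying (\ref{Hom4}) to the result. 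This chain collapses to $\varphi(xx'\varepsilon_{1}(y,y'),yy')$, which is precisely $\varphi\big((x,y)\underset{\varepsilon_{1}}{\bullet}(x',y')\big)$, so $\varphi$ preserves products and is therefore a homomorphism.

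The computation is entirely routine once the decomposition $\varphi(x',y')=\varphi(x',1)\underset{\varepsilon_{2}}{\bullet}\varphi(1,y')$ is isolated; the main (and very mild) obstacle is merely keeping the associativity rearrangement and the two applications of the hypotheses in the correct order, together with consistent use of the normalization of $\varepsilon_{1}$. No appeal to the explicit matrix form (\ref{Hom}) of $\varphi$ is needed, which is why this criterion is stated for arbitrary set maps.
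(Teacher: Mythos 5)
Your argument is correct and complete: the forward direction is just $\varphi$ applied to the two in-domain identities $(x,y)\underset{\varepsilon_{1}}{\bullet}(x',1)=(xx',y)$ and $(x,y)\underset{\varepsilon_{1}}{\bullet}(1,y')=(x\varepsilon_{1}(y,y'),yy')$, and the converse follows from the decomposition $\varphi(x',y')=\varphi(x',1)\underset{\varepsilon_{2}}{\bullet}\varphi(1,y')$ (the $y=1$ case of \eqref{Hom4}) together with associativity in the target and one application each of \eqref{Hom3} and \eqref{Hom4}. The paper itself gives no proof here --- the lemma is imported by citation from \cite[Lemma 3.2]{S-C20} --- but your proof is the standard one for that source and uses nothing beyond the normalization of the cocycles, which is exactly the hypothesis available.
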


\begin{defn}Let $G_{2}$ be a group which acts trivially on an abelian group $G_{1}$. Let $\varepsilon\in Z^{2}(G_{2},G_{1})$. A map $\chi:G_{1}\rightarrow G_{1}$ is called an $\varepsilon$-endomorphism of $G_{1}$, if $$\chi(x\varepsilon(y,y'))=\chi(x)\chi(\varepsilon(y,y'))$$ for all $x\in G_{1}$, and $y$, $y'\in G_{2}$. If in addition $\chi$ is a bijection, then it is said to be $\varepsilon$-automorphism.
\end{defn}

The following lemma follows directly by using the 2-cocycle condition.
\begin{lem}
Let $G_{2}$ be a group which acts trivially on an abelian group $G_{1}$. Let $\varepsilon \in Z^{2}(G_{2},G_{1})$, $\delta\in Hom(G_{1},G_{2})$  and $\sigma$ an $\varepsilon$-endomorphism of $G_{1}$. Then $\sigma \circ \varepsilon\in Z^{2}(G_{2},G_{1})$, $\delta \circ \varepsilon\in Z^{2}(G_{2},G_{2})$ and
$\varepsilon\circ (\delta \times \delta )\in Z^{2}(G_{1},G_{1})$.
\end{lem}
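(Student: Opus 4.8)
The plan is to verify, for each of the three maps, the normalized 2-cocycle condition directly, by feeding the defining cocycle identity for $\varepsilon$ through the relevant map. Recall that $\varepsilon\in Z^{2}(G_{2},G_{1})$ satisfies $\varepsilon(h,g)\varepsilon(hg,k)=\varepsilon(g,k)\varepsilon(h,gk)$ for all $g,h,k\in G_{2}$, together with the normalization $\varepsilon(g,1)=\varepsilon(1,g)=1$. In each case I will apply the appropriate structure-preserving map to both sides of this identity and then reorganize using whatever multiplicativity that map enjoys.

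I would dispatch the two easier claims first, since they rely only on $\delta$ being a homomorphism. For $\delta\circ\varepsilon$, apply $\delta$ to both sides of the cocycle identity for $\varepsilon$; because $\delta$ is a homomorphism it distributes over the two products, yielding $\delta(\varepsilon(h,g))\delta(\varepsilon(hg,k))=\delta(\varepsilon(g,k))\delta(\varepsilon(h,gk))$, which is exactly the cocycle condition for $\delta\circ\varepsilon$ as a map $G_{2}\times G_{2}\to G_{2}$, and normalization follows from $\delta(1)=1$. For $\varepsilon\circ(\delta\times\delta)$, fix $a,b,c\in G_{1}$ and set $g=\delta(a)$, $h=\delta(b)$, $k=\delta(c)$; since $\delta$ is a homomorphism, $\delta(ba)=hg$ and $\delta(ac)=gk$, so the cocycle condition to be checked for $\varepsilon\circ(\delta\times\delta)$ collapses termwise into $\varepsilon(h,g)\varepsilon(hg,k)=\varepsilon(g,k)\varepsilon(h,gk)$, the identity for $\varepsilon$; normalization again uses $\delta(1)=1$ and the normalization of $\varepsilon$.

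The remaining claim, $\sigma\circ\varepsilon\in Z^{2}(G_{2},G_{1})$, is where the only genuine subtlety lies, and I expect it to be the main obstacle: $\sigma$ is merely an $\varepsilon$-endomorphism, not a homomorphism of $G_{1}$, so a priori it need not distribute over an arbitrary product in $G_{1}$. The key observation is that the defining property $\sigma(x\varepsilon(y,y'))=\sigma(x)\sigma(\varepsilon(y,y'))$ is precisely tailored to the products appearing in the cocycle identity: in $\varepsilon(h,g)\varepsilon(hg,k)$ the first factor is an element $x\in G_{1}$ and the second factor is a value $\varepsilon(y,y')$ with $y=hg$, $y'=k$, so $\sigma$ does split this product; the same holds for $\varepsilon(g,k)\varepsilon(h,gk)$. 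Applying $\sigma$ to the cocycle identity for $\varepsilon$ and splitting each side in this way yields the cocycle condition for $\sigma\circ\varepsilon$. Finally, normalization requires $\sigma(1)=1$, which I would obtain by setting $x=1$ and $y=y'=1$ in the defining identity: since $\varepsilon(1,1)=1$ this reads $\sigma(1)=\sigma(1)\sigma(1)$, and cancellation in the abelian group $G_{1}$ forces $\sigma(1)=1$; hence $(\sigma\circ\varepsilon)(g,1)=\sigma(\varepsilon(g,1))=\sigma(1)=1$.
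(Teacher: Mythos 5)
Your proposal is correct and follows the same route the paper intends: the paper offers no written proof beyond the remark that the lemma ``follows directly by using the 2-cocycle condition,'' and your argument is exactly that direct verification, including the one point that genuinely needs care (that the $\varepsilon$-endomorphism property of $\sigma$ splits precisely the products occurring in the cocycle identity, and that $\sigma(1)=1$ follows from it). Nothing is missing.
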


From now, if $\varphi=\left(
\begin{array}{cc}
\varphi _{11} & \varphi _{12} \\
\varphi _{21} & \varphi _{22}%
\end{array}%
\right) $ is a map from $G_{1}\underset{\varepsilon_{1}}{\times }G_{2}$ to $G_{1}\underset{\varepsilon_{2}}{\times }G_{2}$, then $\varphi$ is defined by the formula \eqref{Hom}. From the previous Lemmas, we get the following interesting result which will be frequently used in the sequel.

\begin{prop}
\label{classification0} Let $G_{2}$ be a group such that the equivalence relation $(\sim)$ is trivial on $Z^{2}(G_{2},G_{2})$. Let $\varphi=\left(
\begin{array}{cc}
\varphi _{11} & \varphi _{12} \\
\varphi _{21} & \varphi _{22}%
\end{array}%
\right)$ be a map from $G_{1}\underset{\varepsilon_{1}}{\times }G_{2}$ to $G_{1}\underset{\varepsilon_{2}}{\times }G_{2}$. Then, $\varphi$ is a group homomorphism if and only if
\begin{enumerate}
\item $\varphi_{21}\in Hom(G_{1}, C_{G_{2}}(\varphi_{22}(G_{2})))$, $\varphi _{22}\in End(G_{2})$ and $\varphi_{11}$ is an $\varepsilon_{1}$-endomorphism,
\label{Hom5}
\item $\left[\{1\}\times\varphi_{22}(G_{2}), \{1\}\times\varphi_{21}(G_{1})\right]=1$,
\item $Im(\varepsilon_{1})\leq Ker(\varphi _{21})$ and $\varepsilon_{2}^{-1}\circ(\varphi_{21}\times\varphi_{21})=\psi_{\varphi_{11}}\in B^{2}(G_{1},G_{1})$,  \label{cond0}
\item $(\varphi_{11}\circ\varepsilon_{1})(\varepsilon_{2}^{-1}\circ(\varphi _{22}\times\varphi_{22}))=\psi_{\varphi_{12}}\in B^{2}(G_{2},G_{1})$. \label{Hom6}
\end{enumerate}
where $\psi_{\varphi_{ij}}(y,y')=\varphi_{ij}(y) \varphi_{ij}(y')\varphi_{ij}(yy')^{-1}$ for all $1\leq i, j\leq2$ and $y$, $y'\in G_{j}$.
\end{prop}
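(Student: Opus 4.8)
The plan is to test the homomorphism identity
$$\varphi\big((x,y)\underset{\varepsilon_1}{\bullet}(x',y')\big)=\varphi(x,y)\underset{\varepsilon_2}{\bullet}\varphi(x',y')$$
directly, substituting the explicit shape \eqref{Hom} of $\varphi$ and the two multiplication rules. Since the underlying set of each extension is $G_1\times G_2$, this identity holds for all $x,x'\in G_1$ and $y,y'\in G_2$ if and only if the first and the second coordinates agree separately (equivalently one may feed \eqref{Hom} into the criterion \eqref{Hom3}--\eqref{Hom4} of Lemma \ref{class2}). Throughout I abbreviate $p=\varphi_{21}(x)$, $p'=\varphi_{21}(x')$, $q=\varphi_{22}(y)$, $q'=\varphi_{22}(y')$, and I use that $\varphi_{21}\in Hom(G_1,G_2)$, that $G_1$ is abelian and central in each extension, and that the $\varphi_{ij}$ are normalized.

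First I would treat the second coordinate, which reads $\varphi_{21}\big(xx'\varepsilon_1(y,y')\big)\varphi_{22}(yy')=pq\,p'q'$. Putting $x=x'=1$ gives $\varphi_{22}(y)\varphi_{22}(y')\varphi_{22}(yy')^{-1}=\varphi_{21}(\varepsilon_1(y,y'))$, that is $\psi_{\varphi_{22}}=\varphi_{21}\circ\varepsilon_1$. By the preceding lemma $\varphi_{21}\circ\varepsilon_1\in Z^2(G_2,G_2)$, so $\psi_{\varphi_{22}}$ is a $2$-cocycle which, being visibly a coboundary, is cohomologous to the trivial one; since $(\sim)$ is trivial on $Z^2(G_2,G_2)$ it must itself be trivial. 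Hence simultaneously $\varphi_{22}\in End(G_2)$ and $Im(\varepsilon_1)\leq Ker(\varphi_{21})$. Feeding these back and cancelling the $\varphi_{21}$-terms, the equation collapses to $p'q=qp'$, i.e. $\varphi_{21}(G_1)\leq C_{G_2}(\varphi_{22}(G_2))$. Thus the second coordinate is equivalent to the $\varphi_{22}$- and $\varphi_{21}$-parts of (1) together with $Im(\varepsilon_1)\leq Ker(\varphi_{21})$ from (3).

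Granting these, the first coordinate becomes the master identity
$$\varphi_{11}\big(xx'\varepsilon_1(y,y')\big)\varphi_{12}(yy')\,\varepsilon_2(pp',qq')=\varphi_{11}(x)\varphi_{11}(x')\varphi_{12}(y)\varphi_{12}(y')\,\varepsilon_2(p,q)\varepsilon_2(p',q')\varepsilon_2(pq,p'q'),$$
call it $(\star)$, an equation in the abelian group $G_1$. The specialization $x=1,\,y'=1$ reduces $(\star)$ to $\varepsilon_2(p',q)=\varepsilon_2(q,p')$, which (as $qp'=p'q$) is exactly condition (2), the commuting of $\{1\}\times\varphi_{22}(G_2)$ with $\{1\}\times\varphi_{21}(G_1)$ in $G_1\underset{\varepsilon_2}{\times}G_2$. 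The specialization $y=y'=1$ yields $\varphi_{11}(x)\varphi_{11}(x')\varphi_{11}(xx')^{-1}=\varepsilon_2(p,p')^{-1}$, i.e. condition (3); and $x=x'=1$ yields $\varphi_{12}(y)\varphi_{12}(y')\varphi_{12}(yy')^{-1}=\varphi_{11}(\varepsilon_1(y,y'))\varepsilon_2(q,q')^{-1}$, i.e. condition (4); the relevant right-hand sides are $2$-cocycles by the preceding lemma and the pullback of $\varepsilon_2$ along $\varphi_{22}$, so $\psi_{\varphi_{11}}$ and $\psi_{\varphi_{12}}$ indeed lie in $B^2$. The crux is then to rewrite $\varepsilon_2(pq,p'q')$ by comparing the two associations of $(1,p)(1,q)(1,p')(1,q')$ in $G_1\underset{\varepsilon_2}{\times}G_2$: associativity (the $2$-cocycle condition for $\varepsilon_2$) gives $\varepsilon_2(p,q)\varepsilon_2(p',q')\varepsilon_2(pq,p'q')=c\,\varepsilon_2(p,p')\varepsilon_2(q,q')\varepsilon_2(pp',qq')$ with defect $c=\varepsilon_2(q,p')\varepsilon_2(p',q)^{-1}$, and condition (2) is precisely $c=1$. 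Substituting this together with (3) and (4) into $(\star)$ cancels every $\varepsilon_2$- and $\varphi_{12}$-factor and leaves $\varphi_{11}(xx'\varepsilon_1(y,y'))=\varphi_{11}(xx')\varphi_{11}(\varepsilon_1(y,y'))$, which, as $xx'$ ranges over all of $G_1$, is exactly the statement that $\varphi_{11}$ is an $\varepsilon_1$-endomorphism, completing (1).

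I expect the main obstacle to be this last regrouping of the three $\varepsilon_2$-factors: one must recognize that the discrepancy between $\varepsilon_2(pq,p'q')$ and $\varepsilon_2(pp',qq')$ is governed solely by associativity in the extension and that its defect is annihilated exactly by condition (2); tracking the normalizations and the order of the non-commuting factors in $G_2$ is the delicate bookkeeping. The converse is obtained by reading every implication above backwards: assuming (1)--(4) one reconstructs the second coordinate from the $\varphi_{21},\varphi_{22}$-parts of (1) and from (3), and reassembles $(\star)$ from (2), (3), (4) and the $\varepsilon_1$-endomorphism property of $\varphi_{11}$ via the same cocycle identity, whence $\varphi$ is a homomorphism.
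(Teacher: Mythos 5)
Your proof is correct and follows essentially the same route as the paper's: expand the homomorphism identity in coordinates via \eqref{Hom}, specialize the variables ($x=1$, $y=1$, etc.) to isolate each of the four conditions, invoke the triviality of $(\sim)$ on $Z^{2}(G_{2},G_{2})$ to force $\varphi_{22}\in End(G_{2})$ and $Im(\varepsilon_{1})\leq Ker(\varphi_{21})$, and regroup the $\varepsilon_{2}$-factors using the $2$-cocycle condition. The only differences are organizational: you test the full product $(x,y)\underset{\varepsilon_{1}}{\bullet}(x',y')$ rather than the two restricted identities \eqref{Hom3}--\eqref{Hom4}, and you are somewhat more explicit than the paper about the converse direction, via the regrouping identity with defect $c=\varepsilon_{2}(q,p')\varepsilon_{2}(p',q)^{-1}$.
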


\begin{proof} Indeed, evaluate the left hand side and right hand side of the formulas \eqref{Hom3} and \eqref{Hom4}, we obtain

\begin{align}&\text{ \ \ }\varphi(x,y) \underset{\varepsilon_{2}}{\cdot}\varphi (x',1)=\varphi (xx',y) \nonumber\\
&\quad \Leftrightarrow (\varphi_{11}(x)\varphi _{12}(y) \varepsilon_{2}( \varphi
_{21}( x),\varphi_{22}( y)),\text{ }\varphi _{21}(x)\varphi_{22}( y))\underset{\varepsilon_{2}}{\cdot}(\varphi_{11}(x'),\varphi_{21}(x'))\nonumber\\
&\quad\text{ \ \ }=(\varphi_{11}(xx')\varphi _{12}(y) \varepsilon_{2}( \varphi
_{21}( xx'),\varphi_{22}(y)),\text{ }\varphi _{21}( xx') \varphi _{22}(y)) \nonumber\\
\label{Eq2}&\quad \Leftrightarrow \varphi_{21}(x)\varphi_{22}(y)\varphi _{21}(x')=\varphi_{21}(xx') \varphi _{22}(y) \text{ and }\\
\label{Eq3}&\quad \text{ \ \ \ } \varphi_{11}(x)\varphi _{12}(y)\varepsilon_{2}(\varphi_{21}(x),\varphi_{22}(y)) \varphi_{11}(x') \varepsilon_{2}(\varphi_{21}(x)\varphi_{22}(y),\varphi_{21}(x'))\\
&\quad \text{ \ \ }=\varphi _{11}(xx') \varphi _{12}(y) \varepsilon_{2}(\varphi_{21}(xx'),\varphi_{22}(y))\nonumber.
\end{align}
Setting $x=1$ in the equations \eqref{Eq2} and \eqref{Eq3}, we obtain
\begin{eqnarray}\label{Eq4}
\varphi_{22}(y)\varphi_{21}(x')=\varphi_{21}(x')\varphi_{22}(y)
\end{eqnarray}
and
\begin{eqnarray}\label{Eq5}
\varepsilon_{2}(\varphi_{22}(y),\varphi_{21}(x'))=\varepsilon_{2}(\varphi_{21}(x'),\varphi_{22}(y)).
\end{eqnarray}
That is, $\left[\{1\}\times\varphi_{22}(G_{2}), \{1\}\times\varphi_{21}(G_{1})\right]=1$. Now, combining the equations \eqref{Eq2} and \eqref{Eq4}, we get that  $\varphi _{21}\in Hom(G_{1}, C_{G_{2}}(\varphi_{22}(G_{2})))$. Thus, using the 2-cocycle condition together with the equations \eqref{Eq4} and \eqref{Eq5}, the equation \eqref{Eq3} yields
\begin{eqnarray}\label{Eq6}
\varphi_{11}(xx')=\varphi _{11}(x)\varphi_{11}( x')\varepsilon_{2}(\varphi_{21}(x),\varphi_{21}(x'))
\end{eqnarray}
which implies that $\varepsilon_{2}^{-1}\circ(\varphi_{21}\times\varphi_{21})\in B^{2}(G_{1},G_{1})$.

On the other hand, we have that
\begin{align}&\text{ \ \ }\varphi(x,y)\underset{\varepsilon_{2}}{\cdot}\varphi(1,y')=\varphi (x\varepsilon_{1}(y,y'),yy')\nonumber\\
&\quad \Leftrightarrow(\varphi_{11}(x) \varphi _{12}(y) \varepsilon _{2}( \varphi
_{21}(x),\varphi_{22}(y)),\text{ }\varphi _{21}(x) \varphi _{22}(y))\underset{\varepsilon_{2}}{\cdot}(\varphi_{12}(y'),\varphi_{22}(y'))\nonumber\\
&\quad \text{ \ \ \ }=(\varphi_{11}(x\varepsilon_{1}(y,y')) \varphi _{12}(yy') \varepsilon _{2}( \varphi
_{21}( x\varepsilon_{1}(y,y')),\varphi_{22}( yy')),\text{ }\varphi _{21}( x\varepsilon_{1}(y,y')) \varphi _{22}( yy'))\nonumber\\
\label{Eq7}&\quad \Leftrightarrow \varphi _{21}(x) \varphi _{22}(y) \varphi_{22}(y')=\varphi _{21}(x\varepsilon_{1}(y,y')) \varphi _{22}( yy') \text{ and }\\
\label{Eq8}&\quad \text{ \ \ } \varphi_{11}(x)\varphi _{12}(y)\varepsilon_{2}( \varphi_{21}(x),\varphi_{22}( y))\varphi _{12}(y')\varepsilon_{2}( \varphi_{21}( x) \varphi _{22}(y),\varphi_{22}(y'))\\
&\quad \text{ \ \ \ }=\varphi_{11}(x\varepsilon_{1}(y,y'))\varphi _{12}(yy')\varepsilon _{2}( \varphi
_{21}( x\varepsilon_{1}(y,y')),\varphi_{22}(yy'))\nonumber.
 \end{align}

 Since $\varphi_{21}$ is a group homomorphism, the equation \eqref{Eq7} implies that $$\varphi_{21}(\varepsilon_{1}(y,y'))=\varphi _{22}(y) \varphi_{22}(y')(\varphi _{22}( yy'))^{-1}$$ for all $y$, $y'\in G_{2}$. So, $\varphi _{21}\circ\varepsilon_{1}\sim 1$. But, by the assumption, there is no nontrivial 2-cocycle in $Z^{2}(G_{2},G_{2})$ that is cohomologous to the trivial 2-cocycle. So, we have $\varphi _{21}\circ\varepsilon_{1}=1$ and then $\varphi_{22}\in End(G_{2})$. Furthermore, by using the 2-cocycle condition, the equation \eqref{Eq8} gives us
 \begin{eqnarray}\label{Eq9}
\varphi_{11}(x\varepsilon_{1}(y,y'))\varphi _{12}(yy')=\varphi_{11}(x)\varphi _{12}(y) \varphi _{12}(y') \varepsilon_{2}(\varphi _{22}(y),\varphi_{22}(y')).
\end{eqnarray}
 But, the equation \eqref{Eq6} yields $\varphi_{11}(x\varepsilon_{1}(y,y'))=\varphi_{11}(x)\varphi _{11}(\varepsilon_{1}(y,y'))$. Thus, the equation \eqref{Eq9} is equivalent to $$\varphi _{11}(\varepsilon_{1}(y,y'))\varphi _{12}(yy')=\varphi _{12}(y) \varphi_{12}(y') \varepsilon_{2}(\varphi _{22}(y),\varphi_{22}(y')),$$ which implies that $$\varphi _{11}(\varepsilon_{1}(y,y'))(\varepsilon_{2}(\varphi _{22}(y),\varphi_{22}(y')))^{-1}=\varphi _{12}(y) \varphi _{12}(y')(\varphi _{12}(yy'))^{-1}.$$
Thus, the proof is completed.
\end{proof}

\begin{rem} \begin{enumerate}
              \item Note that if $G_{2}$ is abelian then the assumption of the previous proposition and the condition $B^{2}(G_{2},G_{2})=1$ are equivalent.
\item If $\varepsilon_{2}\in SZ^{2}(G_{2},G_{1})$ then the second condition of the previous proposition is not required.
            \end{enumerate}
\end{rem}

\begin{defn}
The groups $G_{1}\underset{\varepsilon_{1}}{\times }G_{2}$ and $G_{1}\underset{\varepsilon_{2}}{\times }G_{2}$ are called upper isomorphic, if there exists an
isomorphism $\varphi :G_{1}\underset{\varepsilon_{1}}{\times }G_{2}\longrightarrow
G_{1}\underset{\varepsilon_{2}}{\times }G_{2}$ leaving $G_{1}$ invariant.
\end{defn}

In particular, suppose that $Z(G_{1}\underset{\varepsilon_{1}}{\times }G_{2})=Z(G_{1}\underset{\varepsilon_{2}}{\times }G_{2})=G_{1}$ or $(G_{1}\underset{\varepsilon_{1}}{\times }G_{2})'=(G_{1}\underset{\varepsilon_{2}}{\times }G_{2})'=G_{1}$. So each isomorphism $\varphi=\left(
\begin{array}{cc}
\varphi _{11} & \varphi _{12} \\
\varphi _{21} & \varphi _{22}%
\end{array}%
\right)$ between  $G_{1}\underset{\varepsilon_{1}}{\times }G_{2}$ and $G_{1}\underset{\varepsilon_{2}}{\times }G_{2}$ leaves $G_{1}$ invariant and then $\varphi_{21}=1$. So, the equation
\eqref{Eq7} implies that $\varphi_{22}\in End(G_{2})$ and then the assumption of the previous proposition is not required. This case is covered by the following result which can be viewed as a consequence of \cite[Theorem 3.7]{Sn20}.
\begin{prop}
\label{Main}The groups $G_{1}\underset{\varepsilon_{1}}{\times }G_{2}$ and $G_{1}\underset{\varepsilon_{2}}{\times }G_{2}$ are upper isomorphic
if and only if there exist $%
\sigma \in Aut(G_{1})$ and $\rho \in Aut(G_{2})$ such that
$$(\sigma\circ\varepsilon_{1})(\varepsilon_{2}^{-1}\circ(\rho \times \rho )) \in B^{2}( G_{2},G_{1}).$$
\end{prop}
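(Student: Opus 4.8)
The plan is to specialize the componentwise homomorphism analysis of Lemma~\ref{class2} and Proposition~\ref{classification0} to the case $\varphi_{21}=1$ that is forced by upper isomorphism. Starting from an isomorphism $\varphi$ leaving $G_{1}$ invariant, I would first note, as in the discussion preceding the statement, that $\varphi(x,1)=(\varphi_{11}(x),\varphi_{21}(x))\in G_{1}\times\{1\}$ forces $\varphi_{21}=1$; then \eqref{Hom} collapses, by normalization of $\varepsilon_{2}$, to $\varphi(x,y)=(\varphi_{11}(x)\varphi_{12}(y),\varphi_{22}(y))$. Feeding this into \eqref{Hom3} gives $\varphi_{11}\in End(G_{1})$; the second coordinate of \eqref{Hom4} is \eqref{Eq7}, which with $\varphi_{21}=1$ gives $\varphi_{22}\in End(G_{2})$ directly (this is exactly the place where the standing hypothesis of Proposition~\ref{classification0} becomes unnecessary, since $\varphi_{21}\circ\varepsilon_{1}=1$); and the first coordinate of \eqref{Hom4}, after cancelling $\varphi_{11}(x)$ and using that $\varphi_{11}$ is an endomorphism, becomes the cohomological identity $(\varphi_{11}\circ\varepsilon_{1})(\varepsilon_{2}^{-1}\circ(\varphi_{22}\times\varphi_{22}))=\psi_{\varphi_{12}}\in B^{2}(G_{2},G_{1})$.

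To finish this direction I would upgrade the two endomorphisms to automorphisms. Since $\varphi$ is bijective with $\varphi(G_{1})=G_{1}$, its restriction to $G_{1}$ is an automorphism, so $\varphi_{11}\in Aut(G_{1})$; and because $\varphi_{21}=1$ we have $pr_{2}\circ\varphi=\varphi_{22}\circ pr_{2}$, so the isomorphism induced by $\varphi$ on the quotient $G/G_{1}\cong G_{2}$ is exactly $\varphi_{22}$, whence $\varphi_{22}\in Aut(G_{2})$. Setting $\sigma=\varphi_{11}$ and $\rho=\varphi_{22}$ then yields the asserted membership in $B^{2}(G_{2},G_{1})$.

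For the converse I would run the construction backwards. Given $\sigma\in Aut(G_{1})$ and $\rho\in Aut(G_{2})$ with $\beta:=(\sigma\circ\varepsilon_{1})(\varepsilon_{2}^{-1}\circ(\rho\times\rho))\in B^{2}(G_{2},G_{1})$, the definition of $B^{2}$ supplies $\delta:G_{2}\to G_{1}$ with $\beta(h,g)=\delta(g)\delta(hg)^{-1}\delta(h)$; evaluating at $(1,1)$ and using $\beta(1,1)=1$ forces $\delta(1)=1$, so $\delta$ is normalized. I would then define $\varphi(x,y)=(\sigma(x)\delta(y),\rho(y))$ and verify \eqref{Hom3} and \eqref{Hom4}: the former holds because $\sigma$ is an endomorphism, the latter because $\rho$ is an endomorphism and $\delta$ realizes $\beta$ as a coboundary. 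Thus $\varphi$ is a homomorphism; it fixes $G_{1}$ setwise since $\varphi(x,1)=(\sigma(x),1)$, and it is bijective because one reads off $y$ from $\rho(y)$ and then $x$ from $\sigma(x)$, using that $\sigma$ and $\rho$ are bijections.

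The cohomological cancellation is routine once $\varphi_{21}=1$ is in hand, so the delicate points are organizational rather than computational. One must justify that ``leaving $G_{1}$ invariant'' for an isomorphism translates into the two \emph{separate} automorphism conditions on $\varphi_{11}$ and $\varphi_{22}$, and not merely the endomorphism conditions coming from the homomorphism hypothesis, and that the single map $\delta$ extracted from the coboundary can simultaneously serve as a normalized $\varphi_{12}$ making $\varphi$ a homomorphism. I expect the main obstacle to be precisely this bijectivity bookkeeping, namely establishing the genuine equivalence between invertibility of $\varphi$ and invertibility of the pair $(\sigma,\rho)$, rather than the verification of the $2$-cocycle identity.
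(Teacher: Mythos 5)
Your proof is correct, but note that the paper does not actually prove Proposition~\ref{Main}: it is stated without proof as a consequence of \cite[Theorem 3.7]{Sn20}, with only the preceding paragraph observing that an upper isomorphism forces $\varphi_{21}=1$ and hence makes the standing hypothesis of Proposition~\ref{classification0} superfluous. What you supply is a self-contained derivation along exactly the lines that discussion gestures at: specializing \eqref{Hom3} and \eqref{Hom4} to $\varphi_{21}=1$, which collapses \eqref{Hom} by normalization of $\varepsilon_{2}$, turns the $\varepsilon_{1}$-endomorphism condition on $\varphi_{11}$ into a genuine endomorphism condition (equation \eqref{Eq6} with $\varepsilon_{2}(1,1)=1$), makes \eqref{Eq7} yield $\varphi_{22}\in End(G_{2})$ without any hypothesis on $Z^{2}(G_{2},G_{2})$, and leaves condition \eqref{Hom6} as the asserted coboundary identity. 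Your handling of the two points the paper leaves implicit is also sound: the restriction of $\varphi$ to the invariant subgroup $G_{1}$ gives $\varphi_{11}\in Aut(G_{1})$, the induced map on the quotient gives $\varphi_{22}\in Aut(G_{2})$, and in the converse the normalization $\delta(1)=1$ extracted from $\beta(1,1)=1$ is exactly what guarantees $\varphi(x,1)=(\sigma(x),1)$, so that $\varphi$ really is an upper isomorphism. The net effect is that your argument makes the proposition independent of the external reference, at the cost of redoing the componentwise computation that \cite{Sn20} packages; both routes are legitimate, and yours has the advantage of making visible precisely where bijectivity of $\sigma$ and $\rho$ enters.
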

The following consequence is straightforward.
\begin{cor}\label{Direct1}
Let $\varepsilon\in Z^{2}(G_{1},G_{1})$. The groups $G_{1}\underset{\varepsilon}{\times }G_{2}$ and $G_{1}\times G_{2}$ are upper isomorphic
if and only if there exist $%
\sigma \in Aut(G_{1})$ such that
$\sigma\circ\varepsilon \in B^{2}( G_{2},G_{1})$.
\end{cor}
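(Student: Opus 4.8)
The plan is to derive Corollary \ref{Direct1} as the direct specialization of Proposition \ref{Main} obtained by taking the second central extension to be the trivial one, that is, by setting $\varepsilon_{2}=1$. Under this substitution the group $G_{1}\underset{\varepsilon_{2}}{\times}G_{2}$ is exactly the direct product $G_{1}\times G_{2}$, as recorded in Section 2 (the equality $G_{1}\underset{\varepsilon}{\times}G_{2}=G_{1}\times G_{2}$ holds precisely when the cocycle equals $1$). So the statement to be proved is literally Proposition \ref{Main} read with $\varepsilon_{2}=1$, and the whole task reduces to simplifying the cocycle condition in that proposition.

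First I would invoke Proposition \ref{Main} directly: the groups $G_{1}\underset{\varepsilon}{\times}G_{2}$ and $G_{1}\underset{1}{\times}G_{2}=G_{1}\times G_{2}$ are upper isomorphic if and only if there exist $\sigma\in Aut(G_{1})$ and $\rho\in Aut(G_{2})$ with
$$(\sigma\circ\varepsilon)\bigl(1^{-1}\circ(\rho\times\rho)\bigr)\in B^{2}(G_{2},G_{1}).$$
Next I would simplify the second factor. Since $\varepsilon_{2}=1$ is the trivial $2$-cocycle sending every pair to the identity of $G_{1}$, the composite $\varepsilon_{2}^{-1}\circ(\rho\times\rho)$ evaluated at any $(y,y')$ equals $\varepsilon_{2}(\rho(y),\rho(y'))^{-1}=1$, independently of the choice of $\rho$. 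Hence the product collapses to $\sigma\circ\varepsilon$, and the membership condition becomes simply $\sigma\circ\varepsilon\in B^{2}(G_{2},G_{1})$.

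It remains to observe that the automorphism $\rho\in Aut(G_{2})$ has dropped out of the condition entirely, so that one may take $\rho$ to be any automorphism of $G_{2}$ (for instance the identity) without affecting the criterion. This shows that the existence of the pair $(\sigma,\rho)$ required by Proposition \ref{Main} is equivalent to the existence of $\sigma\in Aut(G_{1})$ alone satisfying $\sigma\circ\varepsilon\in B^{2}(G_{2},G_{1})$, which is exactly the asserted condition.

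I do not expect any genuine obstacle here, since the corollary is a specialization rather than an independent result; the only point requiring a moment of care is the correct reading of the index conventions, namely that the cocycle $\varepsilon\in Z^{2}(G_{1},G_{1})$ in the corollary's hypothesis is being used as the cocycle valued in $G_{1}$ over the quotient $G_{2}$ (so that $\varepsilon$ plays the role of $\varepsilon_{1}$ in Proposition \ref{Main}, and the trivial cocycle plays the role of $\varepsilon_{2}$), together with verifying that substituting $\varepsilon_{2}=1$ indeed makes the $\rho$-dependent factor disappear. Once these bookkeeping checks are made, the proof is immediate.
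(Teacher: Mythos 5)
Your proof is correct and follows exactly the route the paper intends: the paper offers no written proof beyond calling the corollary "straightforward," and the intended argument is precisely your specialization of Proposition \ref{Main} to $\varepsilon_{1}=\varepsilon$, $\varepsilon_{2}=1$, under which the factor $\varepsilon_{2}^{-1}\circ(\rho\times\rho)$ is identically trivial and $\rho$ drops out. Your remark about the index bookkeeping (reading the hypothesis as $\varepsilon\in Z^{2}(G_{2},G_{1})$, the displayed $Z^{2}(G_{1},G_{1})$ being a typo in the statement) is also the right reading.
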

\begin{cor}\label{Direct2}
Let $\varepsilon\in Z^{2}(G_{1},G_{1})$. The groups $G_{1}\times G_{2}$ and $G_{1}\underset{\varepsilon}{\times }G_{2}$ are upper isomorphic
if and only if there exist $%
\rho \in Aut(G_{2})$ such that $\varepsilon_{2}^{-1}\circ(\rho \times \rho ) \in B^{2}( G_{2},G_{1})$.
\end{cor}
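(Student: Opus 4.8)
The plan is to obtain this statement as an immediate specialization of Proposition \ref{Main}, in perfect analogy with Corollary \ref{Direct1} but with the roles of the two factors interchanged. First I would write the direct product as a twisted product with trivial cocycle: recalling from Section 2 that $G_{1}\underset{1}{\times}G_{2}=G_{1}\times G_{2}$, I identify $G_{1}\times G_{2}$ with $G_{1}\underset{\varepsilon_{1}}{\times}G_{2}$ for the trivial $2$-cocycle $\varepsilon_{1}=1$, and I set $\varepsilon_{2}=\varepsilon$. An upper isomorphism between $G_{1}\times G_{2}$ and $G_{1}\underset{\varepsilon}{\times}G_{2}$ is then exactly an upper isomorphism between $G_{1}\underset{\varepsilon_{1}}{\times}G_{2}$ and $G_{1}\underset{\varepsilon_{2}}{\times}G_{2}$, so the criterion of Proposition \ref{Main} applies verbatim.

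Next I would feed these choices into that criterion: the two groups are upper isomorphic if and only if there exist $\sigma\in Aut(G_{1})$ and $\rho\in Aut(G_{2})$ with $(\sigma\circ\varepsilon_{1})(\varepsilon_{2}^{-1}\circ(\rho\times\rho))\in B^{2}(G_{2},G_{1})$. The key simplification is that the first factor collapses. Since $\varepsilon_{1}=1$ is the constant map to the identity of $G_{1}$ and every $\sigma\in Aut(G_{1})$ fixes that identity, the composite $\sigma\circ\varepsilon_{1}$ is again the trivial $2$-cocycle, independently of the choice of $\sigma$. Hence the product $(\sigma\circ\varepsilon_{1})(\varepsilon_{2}^{-1}\circ(\rho\times\rho))$ reduces to $\varepsilon^{-1}\circ(\rho\times\rho)$, and the automorphism $\sigma$ drops out of the condition entirely; in particular one may simply take $\sigma=\mathrm{id}_{G_{1}}$.

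Finally I would read off the surviving requirement: there must exist $\rho\in Aut(G_{2})$ with $\varepsilon^{-1}\circ(\rho\times\rho)\in B^{2}(G_{2},G_{1})$, which is precisely the asserted equivalence. I anticipate no genuine obstacle here, since the argument is a one-step reduction to Proposition \ref{Main}; the only points deserving a line of verification are that trivializing $\varepsilon_{1}$ indeed recovers the direct product (recorded in Section 2) and that $\sigma\circ\varepsilon_{1}=1$ for the trivial $\varepsilon_{1}$, both of which are immediate.
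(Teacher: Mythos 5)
Your proposal is correct and matches the paper's intent exactly: the paper gives no written proof, calling the corollary a straightforward consequence of Proposition \ref{Main}, and your specialization $\varepsilon_{1}=1$, $\varepsilon_{2}=\varepsilon$ with the observation that $\sigma\circ\varepsilon_{1}=1$ is precisely that one-step reduction. You also implicitly handle the paper's notational slip (the statement's $\varepsilon_{2}^{-1}$ should read $\varepsilon^{-1}$) in the intended way.
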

\section{Central extensions with simple or purely non-abelian quotient group}

\begin{prop}
 Let $G_{2}$ be a simple non-abelian group which acts trivially on an abelian group $G_{1}$. The groups $G_{1}\underset{\varepsilon_{1}}{\times }G_{2}$ and $G_{1}\underset{\varepsilon_{2}}{\times }G_{2}$ are isomorphic if and only if they are upper isomorphic.
\end{prop}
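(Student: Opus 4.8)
The plan is to prove the non-trivial implication, since the converse (upper isomorphic obviously implies isomorphic) is immediate from the definition. So suppose $\varphi : G_{1}\underset{\varepsilon_{1}}{\times}G_{2}\to G_{1}\underset{\varepsilon_{2}}{\times}G_{2}$ is an isomorphism; I must produce an isomorphism leaving $G_{1}$ invariant, and in fact I will show that $\varphi$ itself does the job. By Lemma \ref{class1} I write $\varphi$ in matrix form, so that its second coordinate $\pi := pr_{2}\circ\varphi$ is a homomorphism into $G_{2}$ given by $\pi(x,y)=\varphi_{21}(x)\varphi_{22}(y)$, and by definition $\varphi_{21}=pr_{2}\circ\varphi\circ t_{1}$ is exactly the restriction of $\pi$ to $G_{1}\times\{1\}$.

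The crux of the argument is to show $\varphi_{21}=1$. First observe that $G_{1}\times\{1\}$ is central in $G_{1}\underset{\varepsilon_{1}}{\times}G_{2}$: this is immediate from the multiplication rule, the commutativity of $G_{1}$, and the normalization of $\varepsilon_{1}$. Since $\varphi$ is an isomorphism and $pr_{2}$ is surjective, $\pi$ is a surjective homomorphism onto $G_{2}$; hence the image of the central subgroup $G_{1}\times\{1\}$ is central in $G_{2}$, that is, $\varphi_{21}(G_{1})=\pi(G_{1}\times\{1\})\leq Z(G_{2})$. Because $G_{2}$ is simple and non-abelian its centre is trivial, so $\varphi_{21}=1$. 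I expect this to be the only real idea in the proof: recognizing that the surjectivity of $\pi$ forces the abelian image $\varphi_{21}(G_{1})$ into $Z(G_{2})$, which then vanishes by simplicity. Everything else is bookkeeping, and this route has the advantage of sidestepping the hypothesis on $Z^{2}(G_{2},G_{2})$ required in Proposition \ref{classification0}, which is awkward here since $G_{2}$ is non-abelian.

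With $\varphi_{21}=1$ in hand, equation \eqref{Eq7} (a consequence of $\varphi$ being a homomorphism) collapses to $\varphi_{22}(y)\varphi_{22}(y')=\varphi_{22}(yy')$, so $\varphi_{22}\in End(G_{2})$ and $\varphi_{22}(1)=1$. Then the formula \eqref{Hom}, together with the normalization $\varepsilon_{2}(1,\,\cdot\,)=1$, shows that $\varphi(x,1)$ has trivial second coordinate and first coordinate in $G_{1}$, hence $\varphi(G_{1}\times\{1\})\subseteq G_{1}\times\{1\}$. Finally, $\varphi^{-1}$ is again an isomorphism of central extensions of $G_{1}$ by $G_{2}$ (with $\varepsilon_{1}$ and $\varepsilon_{2}$ interchanged), so the same reasoning yields $\varphi^{-1}(G_{1}\times\{1\})\subseteq G_{1}\times\{1\}$. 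Combining the two inclusions gives $\varphi(G_{1}\times\{1\})=G_{1}\times\{1\}$, so $\varphi$ leaves $G_{1}$ invariant and the two extensions are upper isomorphic, as required.
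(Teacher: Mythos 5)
Your proof is correct and follows essentially the same strategy as the paper's: both reduce everything to showing $\varphi_{21}=1$ by exploiting the centrality of $G_{1}\times\{1\}$ together with the simplicity and non-abelianness of $G_{2}$. The only difference is the mechanism for that key step — the paper computes that $\varphi_{21}(G_{1})$ is normal in $G_{2}$ and excludes $\varphi_{21}(G_{1})=G_{2}$ because $G_{2}$ would then be an abelian image of $G_{1}$, whereas you observe directly that the image of the central subgroup $G_{1}\times\{1\}$ under the surjective homomorphism $pr_{2}\circ\varphi$ lies in $Z(G_{2})=1$ — a slightly cleaner route to the same conclusion, and your closing step with $\varphi^{-1}$ to upgrade $\varphi(G_{1}\times\{1\})\subseteq G_{1}\times\{1\}$ to equality is a small but welcome piece of care the paper omits.
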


\begin{proof}
The if direction is clear. For the converse, assume that $G_{1}\underset{\varepsilon_{1}}{\times }G_{2}$ and $G_{1}\underset{\varepsilon_{2}}{\times }G_{2}$ are isomorphic by an isomorphism $\varphi=\left(
\begin{array}{cc}
\varphi _{11} & \varphi _{12} \\
\varphi _{21} & \varphi _{22}%
\end{array}%
\right)$. It follows from the equation $\varphi (x,1) \underset{\varepsilon
_{2}}{\bullet }\varphi ( 1,y)=\varphi (1,y) \underset{\varepsilon
_{2}}{\bullet }\varphi ( x,1)$ that $[\varphi _{21}(x),\varphi _{22}(y)]=1$ for all $x\in G_{1}$, and $y\in G_{2}$. Now let $g\in G_{2}$, there exists an element $(x,y)\in G_{1}\underset{\varepsilon_{1}}{\times }G_{2}$ such that $\varphi(x,y)=(1,g)$, that is $g=\varphi_{21}(x)\varphi_{22}(y)$. So, for all $h\in G_{1}$, we have
\begin{eqnarray*}
g\varphi_{21}(h)g^{-1}&=& \varphi_{21}(x)\varphi_{22}(y)\varphi_{21}(h)\varphi_{22}(y)^{-1}\varphi_{21}(x)^{-1}\\
&= &\varphi_{21}(x)\varphi_{21}(h)\varphi_{21}(x)^{-1}\in \varphi_{21}(G_{1}).
\end{eqnarray*}
Thus, $\varphi_{21}(G_{1})$ is a normal subgroup of $G_{2}$. As $G_{2}$ is simple non-abelian, then $\varphi_{21}(G_{1})$ is either trivial or $G_{2}$. If $\varphi_{21}(G_{1})=G_{2}$ then $\varphi_{21}$ is an epimorphism and therefore $G_{2}$ is abelian, a contradiction. Hence, $\varphi_{21}=1$ and then $\varphi$ maps $G_{1}$ to itself, as required.
\end{proof}

 Recall that a non-abelian group which has no non-trivial abelian direct factor is said to be purely non-abelian.
\begin{thm}
 Let $G_{2}$ be a finite purely non-abelian group which acts trivially on a finite abelian group $G_{1}$. Let $\varphi\in\left. \left\{
\begin{pmatrix}
\sigma & \eta \\
\delta & \rho
\end{pmatrix}
\right| \begin{array}{c}
          \sigma\in Aut(G_{1}), \eta\in Hom(G_{2},G_{1}) \\
          \delta\in Hom(G_{1},G_{2}), \rho\in Aut(G_{2})
        \end{array} \right\}$
where $\sigma$, $\delta$, $\rho$ satisfy the conditions:
\begin{enumerate}
\item \label{condition8}$\left[\{1\}\times G_{2}, \{1\}\times\delta(G_{1})\right]=1$,
\item \label{condition0} $\varepsilon_{2}\circ(\delta\times\delta)=1$ \text{ \ and } $\delta\circ\varepsilon_{1}=1$,
\item $\sigma\circ\varepsilon_{1}=\varepsilon_{2}\circ(\rho\times\rho).$ 
\end{enumerate}
Then, $\varphi$ is an isomorphism from $G_{1}\underset{\varepsilon_{1}}{\times }G_{2}$ to $G_{1}\underset{\varepsilon_{2}}{\times }G_{2}$.
\end{thm}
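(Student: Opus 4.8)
The plan is to prove the statement in two stages: first that $\varphi$ is a group homomorphism, and then that it is bijective, the latter being where the purely non-abelian and finiteness hypotheses genuinely enter.

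For the homomorphism part I would verify the two defining identities \eqref{Hom3} and \eqref{Hom4} of Lemma \ref{class2} directly, rather than invoke Proposition \ref{classification0}, whose blanket hypothesis that $(\sim)$ is trivial on $Z^{2}(G_{2},G_{2})$ is not assumed here. The point is that conditions \eqref{condition8}--(3) are exactly the data that the proof of Proposition \ref{classification0} extracts from \eqref{Hom3}--\eqref{Hom4}, and the only place that proof used the $Z^{2}(G_{2},G_{2})$-assumption was to pass from $\delta\circ\varepsilon_{1}\sim 1$ to $\delta\circ\varepsilon_{1}=1$ -- which condition \eqref{condition0} hands to us outright. Concretely, writing $\sigma=\varphi_{11},\eta=\varphi_{12},\delta=\varphi_{21},\rho=\varphi_{22}$: equation \eqref{Eq2} holds because $\delta(G_{1})$ is central in $G_{2}$ (from \eqref{condition8}) and $\delta$ is a homomorphism; equation \eqref{Eq7} follows from $\delta\circ\varepsilon_{1}=1$ and $\rho\in End(G_{2})$; equation \eqref{Eq3} is obtained by two applications of the $2$-cocycle condition for $\varepsilon_{2}$ together with the centrality and the symmetry of $\varepsilon_{2}$ on $\delta(G_{1})$ encoded in \eqref{condition8} and the vanishing $\varepsilon_{2}\circ(\delta\times\delta)=1$; and equation \eqref{Eq8} reduces, after cancelling the factor $\sigma(x)\eta(y)\eta(y')$ (legitimate since $G_{1}$ is abelian and $\sigma,\eta$ are homomorphisms), to the cocycle identity $\varepsilon_{2}(\delta(x),\rho(y))\varepsilon_{2}(\delta(x)\rho(y),\rho(y'))=\varepsilon_{2}(\rho(y),\rho(y'))\varepsilon_{2}(\delta(x),\rho(y)\rho(y'))$, where condition (3) is used to rewrite $\sigma(\varepsilon_{1}(y,y'))$ as $\varepsilon_{2}(\rho(y),\rho(y'))$. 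Thus $\varphi$ is a homomorphism, and this step is routine cocycle bookkeeping.

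For bijectivity, since both groups are finite of order $|G_{1}||G_{2}|$, it suffices to prove $\ker\varphi=1$. Suppose $\varphi(x,y)=(1,1)$. The second coordinate of \eqref{Hom} gives $\delta(x)\rho(y)=1$, hence $\rho(y)=\delta(x)^{-1}$; condition \eqref{condition0} then yields $\varepsilon_{2}(\delta(x),\rho(y))=\varepsilon_{2}(\delta(x),\delta(x^{-1}))=1$, so the first coordinate collapses to $\sigma(x)\eta(y)=1$. The kernel is therefore cut out by the two relations $\delta(x)=\rho(y)^{-1}$ and $\sigma(x)=\eta(y)^{-1}$. Eliminating $x=(\sigma^{-1}\eta)(y)^{-1}$ from the first relation and applying $\rho^{-1}$ converts the kernel condition into the single equation $f(y)=y$, where $f:=\rho^{-1}\circ\delta\circ\sigma^{-1}\circ\eta$. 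Because $\delta(G_{1})\le Z(G_{2})$ by \eqref{condition8} and $\rho$ is an automorphism, $f$ is a homomorphism from $G_{2}$ into $Z(G_{2})$.

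The crux is to force $y=1$ from $f(y)=y$ with $f\in Hom(G_{2},Z(G_{2}))$, and this is exactly where purely non-abelianness is needed. I would invoke the classical theorem of Adney and Yen: for a finite purely non-abelian group $G_{2}$ and any $h\in Hom(G_{2},Z(G_{2}))$, the map $g\mapsto g\,h(g)$ is an automorphism of $G_{2}$ (equivalently, injective). Applying this to $h(g)=f(g)^{-1}$, the relation $f(y)=y$ gives $y\,f(y)^{-1}=1$, whence $y=1$, and then $x=(\sigma^{-1}\eta)(1)^{-1}=1$. Thus $\ker\varphi$ is trivial, $\varphi$ is injective, and by finiteness it is an isomorphism. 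The main obstacle is precisely this last identification: recognizing that the kernel is the fixed-point set of a homomorphism into the center, and that the purely non-abelian hypothesis is exactly the condition guaranteeing that the associated maps $g\mapsto g\,h(g)$ are automorphisms; everything else is direct verification.
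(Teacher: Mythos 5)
Your proof is correct, and its skeleton coincides with the paper's: establish that $\varphi$ is a homomorphism, then show $\varphi(x,y)=(1,1)$ forces $\delta(x)\rho(y)=1$ and (after the cancellation $\varepsilon_{2}(\delta(x),\delta(x^{-1}))=1$ supplied by condition (2)) $\sigma(x)\eta(y)=1$, and finally reduce the kernel condition to the fixed-point equation $y=\Psi(y)$ for the homomorphism $\Psi=\rho^{-1}\circ\delta\circ\sigma^{-1}\circ\eta\in Hom(G_{2},Z(G_{2}))$. You diverge at two points. First, the paper disposes of the homomorphism step by citing Proposition \ref{classification0}, even though that proposition carries the blanket hypothesis that $(\sim)$ is trivial on $Z^{2}(G_{2},G_{2})$, which the theorem does not assume; your direct verification of \eqref{Hom3}--\eqref{Hom4} sidesteps this. (In fact only the ``only if'' half of Proposition \ref{classification0} uses that hypothesis --- precisely to upgrade $\delta\circ\varepsilon_{1}\sim 1$ to $\delta\circ\varepsilon_{1}=1$, which condition (2) grants outright --- so the citation is salvageable, but your caution is warranted and your cocycle computations check out.) Second, and more substantively, to kill the fixed point you invoke the Adney--Yen theorem (for $G_{2}$ finite purely non-abelian and $h\in Hom(G_{2},Z(G_{2}))$, the map $g\mapsto g\,h(g)$ is an automorphism), applied with $h=\Psi^{-1}$, whereas the paper appeals to Fitting's Lemma to split $G_{2}\cong Ker\,\Psi\times Im\,\Psi$ and derive a contradiction with pure non-abelianness. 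Your route is the tighter of the two: Fitting's Lemma really gives $G_{2}\cong Ker\,\Psi^{n}\times Im\,\Psi^{n}$ for large $n$, so the paper's version tacitly still owes the (easy) case in which $\Psi$ is nilpotent but nontrivial, where no contradiction arises and one must instead conclude $y=\Psi^{n}(y)=1$ directly; the Adney--Yen formulation absorbs both cases at once. Both approaches rest on standard facts of comparable depth, so this is a genuine but modest improvement in rigor rather than a structurally different proof.
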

\begin{proof}
Indeed, the map $\varphi$ is defined by the formula \eqref{Hom}. By Proposition \ref{classification0}, the map $\varphi$ is clearly a group homomorphism. Now, assume that $\varphi(x,y)=1$. So $\delta(x)\rho(y)=1$ and then $\rho(y)=\delta(x^{-1})$, which implies that $\sigma(x)\eta(y)\varepsilon_{2}( \delta(x),\delta(x^{-1}))=1$. So, the first equation of the condition (\ref{condition0})  ensures that $\sigma(x)\eta(y)=1$, and then $x= \sigma^{-1}(\eta(y^{-1}))$. Hence, $\rho^{-1}(\delta(\sigma^{-1}(\eta(y))))=y$. From the condition (\ref{condition8}), we have $\left[G_{2},\delta(G_{1})\right]=1$, that is $\delta(G_{1})\leq Z(G_{2})$. Hence, we have  $\Psi=\rho^{-1}\circ\delta\circ\sigma^{-1}\circ\eta\in Hom(G_{2},Z(G_{2}))$, and then $Im\Psi\trianglelefteq G_{2}$. So, by Fitting's Lemma, we have $G_{2}\cong Ker\Psi\times Im\Psi$ which contradicts to the fact that $G_{2}$ is purely non-abelian. Thus $y=1$ and then $x=1$. Therefore, the map $\varphi$ is injective, and then it is an isomorphism.
\end{proof}

\begin{rem}
The previous proposition will not be true if $G_{2}$ is not purely non-abelian. Indeed, assume that $G_{1}$ is a direct factor of $G_{2}$ and let $\varphi=\left(
\begin{array}{cc}
id_{G_{1}} & \varphi _{12} \\
\varphi _{21} & id_{G_{2}}%
\end{array}%
\right)$ be a map from $G_{1}\underset{\varepsilon_{1}}{\times }G_{2}$ to $G_{1}\underset{\varepsilon_{2}}{\times }G_{2}$ where $\varphi_{12}(x)=\varphi_{21}(x)=x^{-1}$ for all $x\in G_{1}$. So, using formula \eqref{Hom}, we obtain $\varphi(x,x)=(\varepsilon_{2}( \varphi_{21}(x),\varphi_{21}(x^{-1})),1)$. Hence, by using the first equation of the condition (\ref{condition0}), we get $\varphi(x,x)=(1,1)$. Therefore, $\varphi$ is not an isomorphism.
\end{rem}

\section{Lower isomorphism problem for central extensions}

\begin{defn}Let $G_{2}$ be a group which acts trivially on an abelian group $G_{1}$. The groups $G_{1}\underset{\varepsilon_{1}}{\times }G_{2}$ and $G_{1}\underset{\varepsilon_{2}}{\times }G_{2}$ are called lower isomorphic, if there exists an
isomorphism $\varphi:G_{1}\underset{\varepsilon_{1}}{\times }G_{2}\longrightarrow
G_{1}\underset{\varepsilon_{2}}{\times }G_{2}$ leaving $G_{2}$ invariant.
\end{defn}

We now present the following main result of this section.

\begin{thm}
\label{main0} Let $G_{2}$ be a group such that the equivalence relation $(\sim)$ is trivial on $Z^{2}(G_{2},G_{2})$. If the groups $G_{1}\underset{\varepsilon_{1}}{\times }G_{2}$ and $G_{1}\underset{\varepsilon_{2}}{\times }G_{2}$ are lower isomorphic then there exist $\rho \in Aut(G_{2})$, $\delta\in Hom(G_{1},Z(G_{2}))$ and an $\varepsilon_{1}$-automorphism $\sigma$ such that
\begin{enumerate}
\item $\left[\{1\}\times G_{2}, \{1\}\times\delta(G_{1})\right]=1$, $Im(\varepsilon_{1})\leq Ker(\delta)$,\label{condition1}
\item $\varepsilon_{2}^{-1}\circ(\delta\times\delta)=\psi_{\sigma}\in B^{2}(G_{1},G_{1})$ where $\psi_{\sigma}(x,x')=\sigma(x) \sigma(x')\sigma(xx')^{-1}$ for all $x$, $x'\in G_{1}$,\label{condition2}
\item $\varepsilon_{2}\circ(\rho\times\rho)=\sigma\circ\varepsilon_{1}$.\label{condition3}
\end{enumerate}
\end{thm}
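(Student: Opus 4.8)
My plan is to pick a witnessing isomorphism $\varphi$, read off its matrix entries $\varphi_{ij}=pr_i\circ\varphi\circ t_j$, and then let Proposition \ref{classification0} do all of the cohomological work. First I would unwind the hypothesis: being \emph{lower isomorphic} means there is an isomorphism $\varphi\colon G_1\underset{\varepsilon_1}{\times}G_2\to G_1\underset{\varepsilon_2}{\times}G_2$ with $\varphi(\{1\}\times G_2)=\{1\}\times G_2$. Evaluating $\varphi(1,y)=(\varphi_{12}(y),\varphi_{22}(y))$ and demanding that it lie in $\{1\}\times G_2$ forces $\varphi_{12}=1$, and then $\varphi$ restricts on $\{1\}\times G_2$ to $y\mapsto(1,\varphi_{22}(y))$; since $\varphi$ carries this set bijectively onto $\{1\}\times G_2$, the endomorphism $\rho:=\varphi_{22}$ is a bijection of $G_2$. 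I then set $\sigma:=\varphi_{11}$ and $\delta:=\varphi_{21}$.

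Next I would invoke Proposition \ref{classification0}, whose hypothesis on $Z^2(G_2,G_2)$ is exactly the one assumed here. It gives at once that $\rho=\varphi_{22}\in End(G_2)$ (hence $\rho\in Aut(G_2)$ by the first step), that $\sigma$ is an $\varepsilon_1$-endomorphism, that $Im(\varepsilon_1)\le Ker(\delta)$, that $\varepsilon_2^{-1}\circ(\delta\times\delta)=\psi_\sigma\in B^2(G_1,G_1)$ from its third condition, and — using $\psi_{\varphi_{12}}=1$ (because $\varphi_{12}=1$) in its fourth condition — that $\sigma\circ\varepsilon_1=\varepsilon_2\circ(\rho\times\rho)$. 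These are precisely conditions (2) and (3). The proposition also yields $\delta\in Hom(G_1,C_{G_2}(\rho(G_2)))$ and $[\{1\}\times\rho(G_2),\{1\}\times\delta(G_1)]=1$; here I would use that $\rho$ is onto, so $\rho(G_2)=G_2$ and $C_{G_2}(\rho(G_2))=Z(G_2)$, which upgrades these to $\delta\in Hom(G_1,Z(G_2))$ and to condition (1). As a cross-check, the same conclusion follows because $\varphi$ must send the central kernel $G_1\times\{1\}$ into $Z(G_1\underset{\varepsilon_2}{\times}G_2)$, whose elements $(a,b)$ satisfy $b\in Z(G_2)$ and $\varepsilon_2(b,\cdot)=\varepsilon_2(\cdot,b)$.

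The one remaining point, and the step I expect to be the crux, is promoting the $\varepsilon_1$-endomorphism $\sigma$ to an $\varepsilon_1$-automorphism. Injectivity is quick: if $\sigma(x)=1$ then $\varphi(x,1)=(1,\delta(x))$, and choosing $y$ with $\rho(y)=\delta(x)$ (possible since $\rho$ is onto) gives $\varphi(1,y)=(1,\delta(x))=\varphi(x,1)$, so $x=1$ by injectivity of $\varphi$. Surjectivity is more delicate. Given $a\in G_1$, surjectivity of $\varphi$ provides a preimage $(x_0,y_0)$ of $(a,1)$; the second coordinate forces $\rho(y_0)=\delta(x_0)^{-1}$, whence $a=\sigma(x_0)\,\varepsilon_2(\delta(x_0),\delta(x_0)^{-1})$. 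The naive reading leaves the unwanted cocycle factor $\varepsilon_2(\delta(x_0),\delta(x_0)^{-1})$, and the key is that condition (2) evaluates it, namely $\varepsilon_2(\delta(x_0),\delta(x_0^{-1}))=\sigma(x_0^{-1})^{-1}\sigma(x_0)^{-1}$ (using $\delta(x_0)^{-1}=\delta(x_0^{-1})$). Substituting and cancelling in the abelian group $G_1$ collapses the expression to $a^{-1}=\sigma(x_0^{-1})\in Im(\sigma)$; as $a$ ranges over $G_1$ this shows $Im(\sigma)=G_1$, so $\sigma$ is bijective and the list of required data is complete. The main obstacle is thus not the cohomological bookkeeping, which Proposition \ref{classification0} hands to us, but this cancellation; I would therefore establish condition (2) before attempting the surjectivity argument.
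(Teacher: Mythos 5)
Your overall route is the same as the paper's: read off the matrix entries of a witnessing lower isomorphism, note $\varphi_{12}=1$ and $\rho=\varphi_{22}$ bijective, feed everything into Proposition \ref{classification0} to get conditions (1)--(3), and then upgrade the $\varepsilon_1$-endomorphism $\sigma=\varphi_{11}$ to a bijection. Your surjectivity argument is correct and in fact more carefully stated than the paper's (which asserts $\varphi_{11}(x)=g$ where the computation actually yields $g=\sigma(x^{-1})^{-1}$; either way $g\in Im(\sigma)$).

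There is, however, a genuine gap in your injectivity step. You prove only that $\sigma(x)=1$ implies $x=1$, i.e.\ that $\sigma$ has trivial ``kernel''. But $\sigma=\varphi_{11}$ is \emph{not} a group homomorphism: by equation \eqref{Eq6} in the proof of Proposition \ref{classification0} it satisfies the twisted rule $\sigma(xx')=\sigma(x)\sigma(x')\varepsilon_{2}(\delta(x),\delta(x'))$, so from $\sigma(x)=\sigma(x')$ you can only deduce $\sigma(x^{-1}x')=\varepsilon_{2}(\delta(x),\delta(x^{-1}x'))^{-1}$, not $\sigma(x^{-1}x')=1$. Hence triviality of the fiber over $1$ does not give injectivity. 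The repair is essentially the computation you already carried out for surjectivity: for each $x$ one has $\varphi\bigl(x,\rho^{-1}(\delta(x)^{-1})\bigr)=\bigl(\sigma(x)\varepsilon_{2}(\delta(x),\delta(x^{-1})),1\bigr)=(\sigma(x^{-1})^{-1},1)$ by condition (\ref{condition2}), and since $x$ is recoverable from the pair $\bigl(x,\rho^{-1}(\delta(x)^{-1})\bigr)$, injectivity of $\varphi$ forces $x\mapsto\sigma(x^{-1})^{-1}$, and hence $\sigma$, to be injective. This is exactly what the paper does via the auxiliary bijection $\psi(x,y)=(x,\varphi_{22}^{-1}(\varphi_{21}(x)^{-1})y)$.
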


\begin{proof} Suppose that $G_{1}\underset{\varepsilon_{1}}{\times }G_{2}$ and $G_{1}\underset{\varepsilon_{2}}{\times }G_{2}$ are isomorphic by an isomorphism $\varphi=\left(
\begin{array}{cc}
\varphi _{11} & 1 \\
\varphi _{21} & \varphi _{22}%
\end{array}%
\right)$. From Lemma \ref{class1}, we have that $$\varphi(x,y)=(\varphi_{11}(x)\varepsilon _{2}( \varphi
_{21}(x),\varphi_{22}(y)),\text{ }\varphi _{21}(x) \varphi_{22}( y))$$ for all $x\in G_{1}$, $y\in G_{2}$. Since $\varphi $ is bijective, then so is $\varphi_{22}$. By Proposition \ref{classification0}, the maps $\varphi_{21}\in Hom(G_{1},Z(G_{2}))$, $\varphi _{22}\in Aut(G_{2})$ and the $\varepsilon_{1}$-endomorphism $\varphi_{11}$ satisfy the conditions (\ref{condition1}-\ref{condition3}). So, it remains to show that $\varphi_{11}$ is bijective. Let $g\in G_{1}$, since $\varphi $ is surjective, there exists an element  $(x,y)\in G_{1}\underset{\varepsilon_{1}}{\times }G_{2}$ such that $\varphi(x,y)=(g,1)$, that is $\varphi_{11}(x)\varepsilon _{2}( \varphi
_{21}(x),\varphi_{22}(y))=g$ and $\varphi _{21}(x) \varphi_{22}( y)=1$. So, $\varphi_{11}(x)\varepsilon_{2}( \varphi_{21}(x),\varphi_{21}(x^{-1}))=g$ and then, using the condition (\ref{condition2}), we have $\varphi_{11}(x)=g$. Therefore, $\varphi_{11}$ is surjective. On the other hand, the map $\psi$ defined by $\psi (x,y)=(x,y\varphi _{22}^{-1}(\varphi_{21}(x)^{-1}))$ is a bijection and we have $\varphi \circ \psi(x^{-1},1)=(\varphi_{11}(x^{-1})\varepsilon_{2}(\varphi_{21}(x^{-1}),\varphi_{21}(x)),1)$ for all $x\in G_{1}$. Thus, the condition (\ref{condition2}) ensures that $\varphi \circ \psi(x^{-1},1)=(\varphi_{11}(x)^{-1},1)$ for all $x\in G_{1}$. Since $\varphi \circ \psi$ is injective, then so is $\varphi_{11}$. Thus, the desired result follows directly by taking $\rho=\varphi _{22}$, $\sigma =\varphi _{11}$ and $\delta=\varphi_{21}$.
\end{proof}

\begin{rem} The converse of the previous result holds if $G_{1}$ and $G_{2}$ are finite. Indeed, it suffices to show that $\varphi$ is injective. Let $(x,y)\in G_{1}\times
_{\varepsilon}G_{2}$ such that $\varphi(x,y)=(1,1)$. Then, the equation $\varphi _{21}(x) \varphi_{22}(y)= 1$ implies that $\varphi_{22}(y)=\varphi _{21}(x^{-1})$. So $\varphi_{11}(x)\varepsilon_{2}( \varphi
_{21}(x),\varphi_{21}(x^{-1}))=1$. Using the condition (\ref{condition2}), we get $\varphi_{11}(x^{-1})^{-1}=1$ and then $x=1$ since $\varphi_{11}$ is injective. Since $\varphi _{21}(1)= 1$, it follows that $\varphi_{22}(y)= 1$ and then $y=1$ since $\varphi_{22}$ is injective. Therefore, $\varphi$ is bijective and then it is a lower isomorphism by Proposition \ref{classification0}. As required.
\end{rem}

As direct consequences of Theorem \ref{main0}, we derive

\begin{cor}\label{Direct3}
 Let $\varepsilon\in Z^{2}(G_{2},G_{1})$. The groups $G_{1}\underset{\varepsilon}{\times }G_{2}$ and $G_{1}\times G_{2}$ are lower isomorphic if and only if $\varepsilon=1$.
\end{cor}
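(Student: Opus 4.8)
The plan is to treat the two directions separately, the forward implication being immediate and the reverse being a direct reading of Theorem \ref{main0}.

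For the \emph{if} direction, if $\varepsilon=1$ then, as recorded in Section 2, one has literally $G_{1}\underset{\varepsilon}{\times}G_{2}=G_{1}\times G_{2}$, so the identity map is an isomorphism fixing $G_{2}$; in particular the two groups are lower isomorphic.

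For the \emph{only if} direction, I would first observe that the direct product $G_{1}\times G_{2}$ is precisely $G_{1}\underset{\varepsilon_{2}}{\times}G_{2}$ for the trivial cocycle $\varepsilon_{2}=1$, so that a lower isomorphism between $G_{1}\underset{\varepsilon}{\times}G_{2}$ and $G_{1}\times G_{2}$ is exactly a lower isomorphism in the sense of Theorem \ref{main0} with $\varepsilon_{1}=\varepsilon$ and $\varepsilon_{2}=1$. Applying that theorem produces $\rho\in Aut(G_{2})$, $\delta\in Hom(G_{1},Z(G_{2}))$ and an $\varepsilon$-automorphism $\sigma$ satisfying its three conditions. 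The whole point is what conditions \eqref{condition2} and \eqref{condition3} become once $\varepsilon_{2}=1$: condition \eqref{condition3} reads $1=\sigma\circ\varepsilon$, i.e. $\sigma(\varepsilon(y,y'))=1$ for all $y,y'\in G_{2}$, while condition \eqref{condition2} forces $\psi_{\sigma}=1$, so that $\sigma$ is in fact an honest automorphism of $G_{1}$.

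It then remains to turn $\sigma\circ\varepsilon=1$ into $\varepsilon=1$. Since $\sigma$ is an $\varepsilon$-automorphism it is bijective, hence injective; evaluating $\sigma\circ\varepsilon=1$ at $(1,1)$ and using that $\varepsilon$ is normalized gives $\sigma(1)=1$, and then for arbitrary $y,y'$ the equality $\sigma(\varepsilon(y,y'))=1=\sigma(1)$ together with injectivity of $\sigma$ yields $\varepsilon(y,y')=1$. Thus $\varepsilon=1$, as desired.

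I do not expect a serious obstacle here, which is precisely why the statement can be phrased as a corollary; the one point to handle with care is that Theorem \ref{main0} only guarantees $\sigma$ to be an $\varepsilon$-automorphism, which a priori need neither fix the identity nor be multiplicative. The collapse of condition \eqref{condition2} to $\psi_{\sigma}=1$ (equivalently, the bare injectivity of $\sigma$ combined with the normalization of $\varepsilon$) is exactly what is needed to promote $\sigma\circ\varepsilon=1$ to $\varepsilon=1$. One should also keep in mind that, being an application of Theorem \ref{main0}, the argument inherits that theorem's standing hypothesis that the relation $(\sim)$ be trivial on $Z^{2}(G_{2},G_{2})$.
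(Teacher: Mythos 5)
Your proposal is correct and follows essentially the same route as the paper: specialize Theorem \ref{main0} to $\varepsilon_{1}=\varepsilon$, $\varepsilon_{2}=1$, note that condition \eqref{condition2} then forces $\sigma$ to be a genuine automorphism of $G_{1}$, and conclude from $\sigma\circ\varepsilon=1$ and injectivity of $\sigma$ that $\varepsilon=1$. Your explicit care about $\sigma(1)=1$ and the inherited hypothesis on $(\sim)$ is a slight refinement of the paper's terser argument, but the idea is identical.
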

\begin{proof}
The if direction is clear. Conversely, taking $\varepsilon_{1}=\varepsilon$ and $\varepsilon_{2}=1$. So from Proposition \ref{main0}, we get that $\sigma\in Aut(G_{1})$ and $Im(\varepsilon)\leq Ker(\sigma)$ and then we must have $\varepsilon=1$. As required.               
\end{proof}

\begin{cor}\label{Direct4}
Let $\varepsilon\in Z^{2}(G_{1},G_{1})$. The groups $G_{1}\times G_{2}$ and $G_{1}\underset{\varepsilon}{\times }G_{2}$ are lower isomorphic
if and only if there exist $\rho \in Aut(G_{2})$ such that $\varepsilon\circ(\rho \times \rho )=1$.
\end{cor}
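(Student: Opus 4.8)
The plan is to read off both implications from Theorem \ref{main0} by specializing to $\varepsilon_{1}=1$ (so that $G_{1}\underset{\varepsilon_{1}}{\times}G_{2}=G_{1}\times G_{2}$) and $\varepsilon_{2}=\varepsilon$, and to supplement the sufficiency direction with an explicit construction of the isomorphism. This keeps the corollary an honest specialization of the main theorem together with one direct verification.

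For the necessity, suppose $G_{1}\times G_{2}$ and $G_{1}\underset{\varepsilon}{\times}G_{2}$ are lower isomorphic. Applying Theorem \ref{main0} with $\varepsilon_{1}=1$ and $\varepsilon_{2}=\varepsilon$ produces $\rho\in Aut(G_{2})$, $\delta\in Hom(G_{1},Z(G_{2}))$ and an $\varepsilon_{1}$-automorphism $\sigma$ satisfying its three conditions. I only need condition (3), namely $\varepsilon\circ(\rho\times\rho)=\sigma\circ\varepsilon_{1}$. Since $\varepsilon_{1}$ is the trivial cocycle, $(\sigma\circ\varepsilon_{1})(y,y')=\sigma(1)$ for all $y,y'\in G_{2}$; and because $\sigma$ is a $1$-automorphism its defining relation forces $\sigma(1)=1$. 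Hence $\varepsilon\circ(\rho\times\rho)=1$, which is exactly the asserted condition, and conditions (1) and (2) play no role.

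For the sufficiency, suppose there is $\rho\in Aut(G_{2})$ with $\varepsilon\circ(\rho\times\rho)=1$. I would exhibit the lower isomorphism directly by taking $\varphi_{11}=id_{G_{1}}$, $\varphi_{12}=1$, $\varphi_{21}=1$ and $\varphi_{22}=\rho$, which by formula \eqref{Hom} (and normalization of $\varepsilon$) is the map $\varphi(x,y)=(x,\rho(y))$. A one-line check comparing $\varphi((x,y)\cdot(x',y'))=(xx',\rho(y)\rho(y'))$ with $\varphi(x,y)\underset{\varepsilon}{\bullet}\varphi(x',y')=(xx'\varepsilon(\rho(y),\rho(y')),\rho(y)\rho(y'))$ shows that $\varphi$ is a homomorphism precisely because $\varepsilon(\rho(y),\rho(y'))=1$ for all $y,y'$, i.e. because $\varepsilon\circ(\rho\times\rho)=1$. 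Since $\rho$ is bijective, $\varphi$ is a bijection, and as $\varphi_{12}=1$ it leaves $G_{2}$ invariant; thus $\varphi$ is a lower isomorphism.

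I expect no serious obstacle here, since the corollary is genuinely a specialization of Theorem \ref{main0}. The only points requiring care are: (i) noticing that an $\varepsilon_{1}$-automorphism for $\varepsilon_{1}=1$ need not be a group homomorphism but does fix the identity, which is exactly what makes condition (3) collapse to the equality $\varepsilon\circ(\rho\times\rho)=1$ rather than a mere coboundary relation; and (ii) verifying the homomorphism property of the constructed $\varphi$ by the direct computation above rather than through Proposition \ref{classification0}, so that the sufficiency does not silently invoke the triviality of $(\sim)$ on $Z^{2}(G_{2},G_{2})$. It is worth contrasting the resulting equality $\varepsilon\circ(\rho\times\rho)=1$ with the weaker coboundary condition $\varepsilon^{-1}\circ(\rho\times\rho)\in B^{2}(G_{2},G_{1})$ of the upper-isomorphic case (Corollary \ref{Direct2}), which reflects that lower isomorphisms are the more rigid notion.
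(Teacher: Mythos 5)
Your proposal is correct and follows essentially the same route as the paper: necessity by specializing Theorem \ref{main0} to $\varepsilon_{1}=1$, $\varepsilon_{2}=\varepsilon$ and reading off condition (3), and sufficiency via the explicit isomorphism $\varphi(x,y)=(x,\rho(y))$. You merely spell out the details (why $\sigma\circ\varepsilon_{1}=1$, and the direct homomorphism check) that the paper leaves implicit.
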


\begin{proof}
The only if direction follows directly from Proposition \ref{main0} by taking $\varepsilon_{2}=\varepsilon$ and $\varepsilon_{1}=1$. For the converse, the bijection $\varphi$ defined by $\varphi(x,y)=(x,\text{ }\rho(y))$ is an isomorphism.               
\end{proof}

\begin{cor}
  Further to the assumption of the previous theorem, suppose that $B^{2}(G_{1},G_{1})=1$. The groups $G_{1}\underset{\varepsilon_{1}}{\times }G_{2}$ and $G_{1}\underset{\varepsilon_{2}}{\times }G_{2}$ are lower isomorphic if and only if there exist $\rho \in Aut(G_{2})$ and $\sigma \in Aut(G_{1})$ such that  $\varepsilon_{2}\circ(\rho\times\rho)=\sigma\circ\varepsilon_{1}$.
\end{cor}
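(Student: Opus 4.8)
The plan is to derive this corollary directly from Theorem~\ref{main0} by observing that the hypothesis $B^{2}(G_{1},G_{1})=1$ collapses the three conditions of that theorem into the single cocycle identity $\varepsilon_{2}\circ(\rho\times\rho)=\sigma\circ\varepsilon_{1}$. First I would treat the only-if direction. Assuming $G_{1}\underset{\varepsilon_{1}}{\times}G_{2}$ and $G_{1}\underset{\varepsilon_{2}}{\times}G_{2}$ are lower isomorphic, Theorem~\ref{main0} furnishes $\rho\in Aut(G_{2})$, $\delta\in Hom(G_{1},Z(G_{2}))$ and an $\varepsilon_{1}$-automorphism $\sigma$ satisfying conditions (1)--(3). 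The key simplification is condition (2): since $B^{2}(G_{1},G_{1})=1$, the equality $\varepsilon_{2}^{-1}\circ(\delta\times\delta)=\psi_{\sigma}\in B^{2}(G_{1},G_{1})$ forces both sides to be the trivial $2$-cocycle. From $\psi_{\sigma}=1$ I would read off that $\sigma(xx')=\sigma(x)\sigma(x')$ for all $x,x'\in G_{1}$, so that the $\varepsilon_{1}$-automorphism $\sigma$ is in fact a genuine group automorphism, i.e. $\sigma\in Aut(G_{1})$. Condition (3) then supplies exactly the desired identity $\varepsilon_{2}\circ(\rho\times\rho)=\sigma\circ\varepsilon_{1}$, which completes this direction.

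For the converse direction I would construct an explicit lower isomorphism. Given $\rho\in Aut(G_{2})$ and $\sigma\in Aut(G_{1})$ with $\varepsilon_{2}\circ(\rho\times\rho)=\sigma\circ\varepsilon_{1}$, I would define $\varphi:G_{1}\underset{\varepsilon_{1}}{\times}G_{2}\to G_{1}\underset{\varepsilon_{2}}{\times}G_{2}$ by $\varphi(x,y)=(\sigma(x),\rho(y))$, corresponding to the matrix with $\varphi_{11}=\sigma$, $\varphi_{22}=\rho$, and $\varphi_{12}=\varphi_{21}=1$. The task is then to verify via Proposition~\ref{classification0} that $\varphi$ is a homomorphism: conditions (1) and (2) of that proposition hold trivially because $\varphi_{21}=1$; condition (3) holds since $Im(\varepsilon_{1})\leq Ker(\varphi_{21})=G_{1}$ and $\varepsilon_{2}^{-1}\circ(\varphi_{21}\times\varphi_{21})=1=\psi_{\sigma}$ (the latter because $\sigma$ is a homomorphism); and condition (4) reduces precisely to $\sigma\circ\varepsilon_{1}=\varepsilon_{2}\circ(\rho\times\rho)$, which is our hypothesis, making $\psi_{\varphi_{12}}=1$. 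Since $\sigma$ and $\rho$ are bijections and $\varphi_{21}=1$, the map $\varphi$ is clearly a bijection, hence a lower isomorphism leaving $G_{2}$ invariant.

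I do not expect any serious obstacle here, as this is a clean corollary extracted from the already-proved Theorem~\ref{main0}. The only point requiring a little care is the logical step that $B^{2}(G_{1},G_{1})=1$ upgrades the $\varepsilon_{1}$-automorphism $\sigma$ to an honest automorphism: one must note that an $\varepsilon_{1}$-automorphism is a priori only a bijection with the weaker multiplicativity property on elements of the form $x\varepsilon_{1}(y,y')$, and it is precisely the vanishing of $\psi_{\sigma}$ forced by condition (2) that promotes it to a full group automorphism of $G_{1}$. Beyond this, both directions are routine applications of Proposition~\ref{classification0} and Theorem~\ref{main0}, so the argument should be short.
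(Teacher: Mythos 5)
Your proposal is correct and follows essentially the same route as the paper: apply Theorem~\ref{main0}, use $B^{2}(G_{1},G_{1})=1$ to force $\psi_{\sigma}=1$ and thus upgrade the $\varepsilon_{1}$-automorphism $\sigma$ to a genuine automorphism, read off the identity from condition (3), and for the converse check that $(x,y)\mapsto(\sigma(x),\rho(y))$ is a lower isomorphism. Your write-up is in fact slightly more explicit than the paper's (which leaves the converse verification to the reader), and the point you flag about promoting $\sigma$ to an honest automorphism is exactly the step the paper relies on.
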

\begin{proof}
Indeed, suppose that the groups $G_{1}\underset{\varepsilon_{1}}{\times }G_{2}$ and $G_{1}\underset{\varepsilon_{2}}{\times }G_{2}$ are lower isomorphic. By Proposition \ref{main0}, there exist $\rho \in Aut(G_{2})$, $\delta\in Hom(G_{1},Z(G_{2}))$ and an $\varepsilon_{1}$-automorphism $\sigma$ satisfying the conditions (\ref{condition2}) and (\ref{condition3}). Since $B^{2}(G_{1},G_{1})=1$, the condition (\ref{condition2}) implies that $\sigma \in Aut(G_{1})$. Therefore, we conclude by using the condition (\ref{condition3}). For the converse, we can easily prove that the bijection $\varphi$ defined by $\varphi(x,y)=(\sigma(x),\text{ }\rho(y))$ is an isomorphism. Hence the corollary follows.
\end{proof}

\section{Isomorphisms of central extension with isomorphic factors group}

\begin{defn}
Let $G_{2}$ be an abelian group which acts trivially on an abelian group $G_{1}$. The groups $G_{1}\underset{\varepsilon _{1}}{\times }G_{2}$ and $%
G_{1}\underset{\varepsilon _{2}}{\times }G_{2}$ are called
$(G_{i})$-isomorphic, if there exists an isomorphism $\varphi=\left(
\begin{array}{cc}
\varphi _{11} & \varphi _{12} \\
\varphi _{21} & \varphi _{22}%
\end{array}%
\right) $  between $G_{1}%
\underset{\varepsilon _{1}}{\times }G_{2}$ and $G_{1}\underset{%
\varepsilon _{2}}{\times }G_{2}$, such that $\varphi_{ii}=1$. A $(G_{1},G_{2})$-isomorphic is an isomorphism that is both $(G_{1})$-isomorphism and $(G_{2})$-isomorphism.
\end{defn}

\begin{prop}
\label{MAIN1} Suppose that the groups $G_{1}\underset{\varepsilon_{1}}{\times }G_{2}$ and $G_{1}\underset{\varepsilon_{2}}{\times }G_{2}$ are $(G_{2})$-isomorphic. Then, there exist an $\varepsilon_{1}$-endomorphism $\sigma$ of $G_{1}$, an injective map $\eta:G_{2}\rightarrow G_{1}$ and an epimorphism $\delta:G_{1}\rightarrow G_{2}$ such that:

\begin{enumerate}
  \item $\varepsilon_{2}^{-1}\circ(\delta\times\delta)=\psi_{\sigma}\in B^{2}(G_{1},G_{1}),\text{\ }Im(\varepsilon_{1})\leq Ker(\delta)$,
  \item $\sigma\circ\varepsilon_{1}=\psi_{\eta}\in B^{2}(G_{2},G_{1}).$
\end{enumerate}
\end{prop}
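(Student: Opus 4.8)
The plan is to unwind the matrix form of the isomorphism, feed it into the homomorphism criterion of Lemma \ref{class2}, and then read off the three conclusions from the $G_1$- and $G_2$-components of the two resulting identities; finally I would extract injectivity of $\eta$ and surjectivity of $\delta$ from the bijectivity of $\varphi$. By hypothesis there is an isomorphism $\varphi=\left(\begin{array}{cc}\varphi_{11}&\varphi_{12}\\\varphi_{21}&\varphi_{22}\end{array}\right)$ with $\varphi_{22}=1$. I set $\sigma=\varphi_{11}$, $\eta=\varphi_{12}$, $\delta=\varphi_{21}$; since $\varepsilon_{2}$ is normalized, formula \eqref{Hom} collapses to $\varphi(x,y)=(\sigma(x)\eta(y),\,\delta(x))$. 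I would deliberately \emph{not} invoke Proposition \ref{classification0}, since its standing hypothesis that $(\sim)$ be trivial on $Z^{2}(G_{2},G_{2})$ is not assumed here; instead I apply Lemma \ref{class2} directly, and the vanishing $\varphi_{22}=1$ is exactly what lets me bypass that hypothesis.

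First I would substitute $\varphi$ into the homomorphism identity \eqref{Hom3}. Comparing $G_{2}$-components yields that $\delta$ is a homomorphism, and comparing $G_{1}$-components — after cancelling the common factor $\eta(y)$, which is legitimate as $G_{1}$ is abelian — gives $\sigma(xx')=\sigma(x)\sigma(x')\varepsilon_{2}(\delta(x),\delta(x'))$, that is $\varepsilon_{2}^{-1}\circ(\delta\times\delta)=\psi_{\sigma}\in B^{2}(G_{1},G_{1})$. This is the cocycle half of conclusion (1).

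Next I would substitute $\varphi$ into the second identity \eqref{Hom4}. Its $G_{2}$-component reads $\delta(x)=\delta(x\varepsilon_{1}(y,y'))$, so $\delta(\varepsilon_{1}(y,y'))=1$, giving $Im(\varepsilon_{1})\leq Ker(\delta)$ and completing conclusion (1); this is precisely the step where $\varphi_{22}=1$ forces $\delta\circ\varepsilon_{1}=1$ outright rather than only up to a coboundary. Feeding this back into the relation of the previous paragraph with $x'=\varepsilon_{1}(y,y')$ shows that $\sigma$ is an $\varepsilon_{1}$-endomorphism. Then the $G_{1}$-component of \eqref{Hom4}, after cancelling $\sigma(x)$ and using the $\varepsilon_{1}$-endomorphism property, simplifies to $\sigma(\varepsilon_{1}(y,y'))=\eta(y)\eta(y')\eta(yy')^{-1}=\psi_{\eta}(y,y')$, which is conclusion (2).

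Finally, bijectivity of $\varphi$ supplies the two remaining structural facts. For surjectivity of $\delta$, I would take any $h\in G_{2}$, choose a preimage of $(1,h)$, and read off $\delta(x)=h$, so $\delta$ is an epimorphism. For injectivity of $\eta$, I would note $\varphi(1,y)=(\eta(y),1)$, whence $\eta(y)=\eta(y')$ forces $(1,y)=(1,y')$ and hence $y=y'$. I expect no serious obstacle in the computations themselves; the only point genuinely requiring care is the one flagged above — resisting the temptation to cite Proposition \ref{classification0} and instead obtaining $\delta\circ\varepsilon_{1}=1$ directly from \eqref{Hom4}, which is exactly why the hypothesis on $Z^{2}(G_{2},G_{2})$ does not appear in this statement.
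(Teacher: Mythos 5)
Your proof is correct, and it diverges from the paper's in one substantive way. The paper's proof is a one-liner: it notes $\varphi_{22}=1$, asserts that $\varphi_{21}$ is surjective and $\varphi_{12}$ is injective "easily," and then cites Proposition \ref{classification0} to read off the two conditions. You instead re-derive the identities directly from Lemma \ref{class2}, and your stated reason for doing so is well taken: Proposition \ref{classification0} carries the standing hypothesis that $(\sim)$ is trivial on $Z^{2}(G_{2},G_{2})$, which Proposition \ref{MAIN1} does not assume, so the paper's citation is, strictly speaking, not licensed. Your computation shows exactly why the hypothesis is dispensable here --- in the proof of Proposition \ref{classification0} that hypothesis is used only to upgrade $\varphi_{21}\circ\varepsilon_{1}\sim 1$ to $\varphi_{21}\circ\varepsilon_{1}=1$, whereas with $\varphi_{22}=1$ the $G_{2}$-component of \eqref{Hom4} gives $\delta(\varepsilon_{1}(y,y'))=1$ outright. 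Your derivations of $\varepsilon_{2}^{-1}\circ(\delta\times\delta)=\psi_{\sigma}$, of the $\varepsilon_{1}$-endomorphism property of $\sigma$ (by specializing $x'=\varepsilon_{1}(y,y')$), and of $\sigma\circ\varepsilon_{1}=\psi_{\eta}$ all check out, and you supply the injectivity/surjectivity arguments that the paper leaves implicit. In short, you prove the same statement by the same underlying computation, but your version is self-contained and repairs (or at least makes transparent) the paper's reliance on a hypothesis it does not state; the paper's version buys brevity at the cost of that gap.
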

\begin{proof}
Let  $\varphi=\left(
\begin{array}{cc}
\varphi _{11} & \varphi _{12} \\
\varphi _{21} & \varphi _{22}%
\end{array}%
\right)$ be a $(G_{2})$-isomorphism from $G_{1}\underset{\varepsilon_{1}}{\times }G_{2}$ to $G_{1}\underset{\varepsilon_{2}}{\times }G_{2}$. So $\varphi _{22}=1$, and therefore we can show easily that  $\varphi_{21}$ is surjective and $\varphi_{12}$ is injective. Hence, by Proposition \ref{classification0}, the desired conditions follows directly by taking $\sigma=\varphi _{11}$,  $\delta=\varphi _{21}$ and $\eta=\varphi _{12}$.
\end{proof}

Note that if $G_{2}$ is non-abelian then the condition $\delta\in Epi(G_{1},G_{2})$ implies that $\delta=1$. Therefore, the previous result becomes a direct consequence of Corollary \ref{Main}.

\begin{cor}
Suppose that $G_{1}$ and $G_{2}$ are two finite abelian groups with the same order. The groups $G_{1}\underset{\varepsilon_{1}}{\times }G_{2}$ and $G_{1}\underset{\varepsilon_{2}}{\times }G_{2}$ are $(G_{2})$-isomorphic if and only if $\varepsilon_{1}=1$ and there exists an isomorphism $\delta:G_{1}\rightarrow G_{2}$ such that $\varepsilon_{2}^{-1}\circ(\delta\times\delta)\in B^{2}(G_{1},G_{1})$.
\end{cor}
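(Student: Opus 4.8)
The plan is to read the ``only if'' direction straight off Proposition~\ref{MAIN1}, using the equal-order hypothesis to upgrade the maps it produces, and to settle the ``if'' direction by an explicit construction. First I would assume that $G_{1}\underset{\varepsilon_{1}}{\times }G_{2}$ and $G_{1}\underset{\varepsilon_{2}}{\times }G_{2}$ are $(G_{2})$-isomorphic. Proposition~\ref{MAIN1} then supplies an $\varepsilon_{1}$-endomorphism $\sigma$ of $G_{1}$, an injective map $\eta\colon G_{2}\to G_{1}$ and an epimorphism $\delta\colon G_{1}\to G_{2}$ with $\varepsilon_{2}^{-1}\circ(\delta\times\delta)=\psi_{\sigma}\in B^{2}(G_{1},G_{1})$ and $Im(\varepsilon_{1})\leq Ker(\delta)$. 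Since $G_{1}$ and $G_{2}$ are finite of the same order and $\delta$ is a surjective homomorphism, $\delta$ is a bijection, hence an isomorphism; in particular $Ker(\delta)=\{1\}$. The inclusion $Im(\varepsilon_{1})\leq Ker(\delta)$ then forces every value of $\varepsilon_{1}$ to be trivial, i.e.\ $\varepsilon_{1}=1$, while the first displayed condition is exactly the assertion $\varepsilon_{2}^{-1}\circ(\delta\times\delta)\in B^{2}(G_{1},G_{1})$. This yields the stated conclusion with $\delta$ the required isomorphism.

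For the converse I would assume $\varepsilon_{1}=1$ together with an isomorphism $\delta\colon G_{1}\to G_{2}$ satisfying $\varepsilon_{2}^{-1}\circ(\delta\times\delta)\in B^{2}(G_{1},G_{1})$, and build the $(G_{2})$-isomorphism by hand. By definition of $B^{2}(G_{1},G_{1})$ there is a map $\sigma\colon G_{1}\to G_{1}$, which we may take normalized so that $\sigma(1)=1$, with $\psi_{\sigma}=\varepsilon_{2}^{-1}\circ(\delta\times\delta)$; because $\varepsilon_{1}=1$ this $\sigma$ is automatically an $\varepsilon_{1}$-endomorphism. As $\delta$ is an isomorphism we have $G_{1}\cong G_{2}$, so I may also fix an isomorphism $\eta\colon G_{2}\to G_{1}$, for instance $\eta=\delta^{-1}$. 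I then set $\varphi=\left(\begin{array}{cc}\sigma & \eta\\ \delta & 1\end{array}\right)$, which by \eqref{Hom} and normalization of $\varepsilon_{2}$ is the map $\varphi(x,y)=(\sigma(x)\eta(y),\,\delta(x))$, and which manifestly satisfies $\varphi_{22}=1$.

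It remains to check that $\varphi$ is an isomorphism. Bijectivity is immediate: if $\varphi(x,y)=\varphi(x',y')$ then $\delta(x)=\delta(x')$ gives $x=x'$, whence $\eta(y)=\eta(y')$ and $y=y'$, so $\varphi$ is injective and therefore bijective by equality of orders. For the homomorphism property I would argue directly rather than invoking Proposition~\ref{classification0}, whose standing hypothesis on $Z^{2}(G_{2},G_{2})$ is not assumed here: computing $\varphi(x,y)\underset{\varepsilon_{2}}{\bullet}\varphi(x',y')$ and comparing it with $\varphi(xx',yy')$ (the product in the domain being componentwise, as $\varepsilon_{1}=1$), the second coordinates agree because $\delta$ is a homomorphism, and the first coordinates agree precisely when $\sigma(xx')=\sigma(x)\sigma(x')\varepsilon_{2}(\delta(x),\delta(x'))$ and $\eta(yy')=\eta(y)\eta(y')$.

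The main obstacle, indeed essentially the only delicate point, is this last reduction. One must recognize that the single coboundary hypothesis $\varepsilon_{2}^{-1}\circ(\delta\times\delta)\in B^{2}(G_{1},G_{1})$, rewritten as $\sigma(x)\sigma(x')\varepsilon_{2}(\delta(x),\delta(x'))=\sigma(xx')$, is exactly what makes the $G_{1}$-component of $\varphi$ multiplicative, and that the auxiliary map $\eta$ can simultaneously be taken to be an honest isomorphism, which is available only because $\delta$ forces $G_{1}\cong G_{2}$, so that bijectivity and the identity $\eta(yy')=\eta(y)\eta(y')$ hold together. Once these observations are in place, $\varphi$ is an isomorphism with $\varphi_{22}=1$, that is, a $(G_{2})$-isomorphism, which completes the proof.
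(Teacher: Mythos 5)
Your proposal is correct and follows essentially the same route as the paper: the ``only if'' direction reads the conclusion off Proposition \ref{MAIN1} with the equal-order hypothesis upgrading the epimorphism $\delta$ to an isomorphism (so $Im(\varepsilon_{1})\leq Ker(\delta)=1$), and the converse uses the very same map $\varphi(x,y)=(\sigma(x)\delta^{-1}(y),\,\delta(x))$ that the paper writes down. The only difference is that you carry out the homomorphism verification explicitly where the paper declares it ``clearly an isomorphism,'' which is a welcome bit of extra care but not a different argument.
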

\begin{proof}
Indeed, suppose that the groups $G_{1}\underset{\varepsilon_{1}}{\times }G_{2}$ and $G_{1}\underset{\varepsilon_{2}}{\times }G_{2}$ are $(G_{2})$-isomorphic. By the previous proposition, there exists an epimorphism $\delta:G_{1}\rightarrow G_{2}$ such that $Im(\varepsilon_{1})\leq Ker(\delta)$ and $\varepsilon_{2}^{-1}\circ(\delta\times\delta)\in B^{2}(G_{1},G_{1})$. But $\delta$ is in fact an isomorphism by the assumption, so we must have $\varepsilon_{1}=1$. Conversely, since $\varepsilon_{2}^{-1}\circ(\delta\times\delta)\in B^{2}(G_{1},G_{1})$, it follows that there exists a map $\sigma:G_{1}\rightarrow G_{1}$ such that $\varepsilon_{2}^{-1}(\delta(x),\delta(x'))=\sigma(x)\sigma(x')\sigma(xx')^{-1}$ for all $x$, $x'\in G_{1}$. Let $\delta'$ be the inverse of $\delta$. By the normalization condition, we have $\sigma(1)=1$. So, the bijection $\varphi$ defined by $\varphi(x,y)=(\sigma(x)\delta'(y),\text{ }\delta(x))$ is clearly an isomorphism. As required.
\end{proof}

\begin{prop}\label{main2}
Let $G_{2}$ be a group such that the equivalence relation $(\sim)$ is trivial on $Z^{2}(G_{2},G_{2})$. Suppose that the groups $G_{1}\underset{\varepsilon_{1}}{\times }G_{2}$ and $G_{1}\underset{\varepsilon_{2}}{\times }G_{2}$ are $(G_{1})$-isomorphic. Then, there exist $\rho\in End(G_{2})$, a surjective map $\eta:G_{2}\rightarrow G_{1}$ and a monomorphism $\delta:G_{1}\rightarrow G_{2}$ such that
\begin{enumerate}
  \item $\varepsilon_{1}=1$,\label{Cond1}
  \item $\varepsilon_{2}\circ(\delta\times\delta)=1$ and \label{Cond2}
  \item $\varepsilon_{2}^{-1}\circ(\rho\times\rho)=\psi_{\eta}\in B^{2}(G_{2},G_{1})$.\label{Cond3}
\end{enumerate}
\end{prop}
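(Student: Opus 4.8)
The plan is to follow the pattern of Proposition \ref{MAIN1}, exploiting the duality between the $(G_{2})$- and $(G_{1})$-isomorphic cases. I start from a $(G_{1})$-isomorphism $\varphi$, so that $\varphi_{11}=1$, and set $\rho=\varphi_{22}$, $\eta=\varphi_{12}$ and $\delta=\varphi_{21}$. Since $\varphi$ is in particular a group homomorphism and $(\sim)$ is trivial on $Z^{2}(G_{2},G_{2})$, Proposition \ref{classification0} applies and delivers its four conditions. Now $\varphi_{11}=1$ forces $\psi_{\varphi_{11}}=1$ and $\varphi_{11}\circ\varepsilon_{1}=1$; substituting these into conditions (3) and (4) of Proposition \ref{classification0} gives at once $\rho\in End(G_{2})$, the inclusion $Im(\varepsilon_{1})\leq Ker(\delta)$, the identity $\varepsilon_{2}\circ(\delta\times\delta)=1$ (which is condition (2) of the statement), and $\varepsilon_{2}^{-1}\circ(\rho\times\rho)=\psi_{\eta}\in B^{2}(G_{2},G_{1})$ (condition (3)).

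It then remains to upgrade $\delta$ to a monomorphism, to upgrade $\eta$ to an epimorphism, and to derive $\varepsilon_{1}=1$. For the injectivity of $\delta$ I would evaluate $\varphi$ on elements of the form $(x,1)$: by formula \eqref{Hom} together with $\varphi_{11}(x)=1$ and the normalization $\varphi_{12}(1)=\varphi_{22}(1)=1$, one computes $\varphi(x,1)=(1,\delta(x))$. Hence $\delta(x)=1$ yields $\varphi(x,1)=(1,1)$, and injectivity of the isomorphism $\varphi$ forces $x=1$. This is precisely the place where the hypothesis that $\varphi$ is bijective, rather than merely a homomorphism, is used, and it is the crux of the whole argument.

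Once $\delta$ is injective, the vanishing $\varepsilon_{1}=1$ (condition (1)) follows immediately by combining $Im(\varepsilon_{1})\leq Ker(\delta)$ with $Ker(\delta)=\{1\}$. For the surjectivity of $\eta$ I would use the surjectivity of $\varphi$: given $g\in G_{1}$, pick $(x,y)$ with $\varphi(x,y)=(g,1)$; reading off the second coordinate gives $\varphi_{22}(y)=\delta(x)^{-1}=\delta(x^{-1})$, and substituting into the first coordinate gives $g=\eta(y)\,\varepsilon_{2}(\delta(x),\delta(x^{-1}))$. Since $\varepsilon_{2}\circ(\delta\times\delta)=1$, the cocycle factor is trivial and $g=\eta(y)$, so $\eta$ is onto. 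Taking $\rho=\varphi_{22}$, $\eta=\varphi_{12}$ and $\delta=\varphi_{21}$ then assembles all three conclusions. I expect no serious obstacle beyond the bookkeeping of the normalization conditions and of the fact that $\delta=\varphi_{21}$ is automatically a homomorphism, so that $\delta(x^{-1})=\delta(x)^{-1}$.
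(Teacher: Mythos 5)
Your proposal is correct and follows essentially the same route as the paper's own proof: both extract conditions (2) and (3) from Proposition \ref{classification0} after setting $\varphi_{11}=1$, both get injectivity of $\delta=\varphi_{21}$ from the computation $\varphi(x,1)=(1,\delta(x))$ and hence $\varepsilon_{1}=1$ via $Im(\varepsilon_{1})\leq Ker(\varphi_{21})$, and both get surjectivity of $\eta=\varphi_{12}$ by solving $\varphi(x,y)=(g,1)$ and cancelling the cocycle factor $\varepsilon_{2}(\delta(x),\delta(x^{-1}))=1$. No substantive differences.
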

\begin{proof}
Let  $\varphi=\left(
\begin{array}{cc}
1 & \varphi _{12} \\
\varphi _{21} & \varphi _{22}%
\end{array}%
\right)$ be an isomorphism from $G_{1}\underset{\varepsilon_{1}}{\times }G_{2}$ to $G_{1}\underset{\varepsilon_{2}}{\times }G_{2}$. Hence, by taking $\rho=\varphi _{22}$, $\eta =\varphi _{12}$ and $\delta=\varphi_{21}$, the conditions (\ref{Cond2}) and (\ref{Cond3}) follows directly from Proposition \ref{classification0}. Notice that
$\varphi(x,y)=(\varphi_{12}(y)\varepsilon _{2}( \varphi_{21}(x),\varphi_{22}(y)),\text{ }\varphi _{21}(x) \varphi_{22}( y))$ for all $x\in G_{1}$, $y\in G_{2}$. So, $\varphi(x,1)=(1,\text{ }\varphi _{21}(x))$ and then $\varphi _{21}$ is injective. But, by the condition (\ref{cond0}) of Proposition \ref{classification0}, we have $Im(\varepsilon_{1})\leq Ker(\varphi _{21})$ which implies that $\varepsilon_{1}=1$. On the other hand, let $g\in G_{1}$, then there exists $(x,y)\in G_{1}\underset{\varepsilon_{1}}{\times }G_{2}$ such that $\varphi(x,y)=(g,1)$. This gives us $\varphi_{12}(y)\varepsilon _{2}(\varphi_{21}(x),\varphi_{21}(x^{-1}))=g$. So the condition (\ref{Cond2})  ensures that $\varphi_{12}(y)=g$ and then $\varphi_{12}$ is surjective. As required.
\end{proof}

Now, we derive the following consequences. 

\begin{cor}
 Further to the assumption of the previous proposition, suppose that $G_{1}$ and $G_{2}$ are two finite abelian groups with the same order. The groups $G_{1}\underset{\varepsilon_{1}}{\times }G_{2}$ and $G_{1}\underset{\varepsilon_{2}}{\times }G_{2}$ are $(G_{1})$-isomorphic if and only if $\varepsilon_{1}=1$ and there exists an isomorphism $\delta:G_{1}\rightarrow G_{2}$ such that $\varepsilon_{2}\circ(\delta\times\delta)=1$.
\end{cor}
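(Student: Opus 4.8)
The plan is to obtain the forward implication almost immediately from Proposition~\ref{main2}, using the equal-order finiteness hypothesis only to turn the injection it produces into a bijection, and to settle the converse by an explicit construction verified through Proposition~\ref{classification0}.

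For the forward implication I would assume that the two groups are $(G_{1})$-isomorphic and apply Proposition~\ref{main2} directly. It supplies a monomorphism $\delta\colon G_{1}\to G_{2}$ together with the relations $\varepsilon_{1}=1$ from condition (\ref{Cond1}) and $\varepsilon_{2}\circ(\delta\times\delta)=1$ from condition (\ref{Cond2}). Since $G_{1}$ and $G_{2}$ are finite with $|G_{1}|=|G_{2}|$, the injective homomorphism $\delta$ is automatically surjective, hence an isomorphism, and these two relations are then precisely the asserted conclusion; in particular condition (\ref{Cond3}) plays no role here.

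For the converse I would begin with the key observation that the hypotheses already force $\varepsilon_{2}=1$. Indeed, if $\delta\colon G_{1}\to G_{2}$ is an isomorphism then every pair $(b,b')\in G_{2}\times G_{2}$ has the form $(\delta(x),\delta(x'))$, so $\varepsilon_{2}(b,b')=(\varepsilon_{2}\circ(\delta\times\delta))(x,x')=1$ for all $b,b'\in G_{2}$. Together with $\varepsilon_{1}=1$ this means both central extensions are literally the direct product $G_{1}\times G_{2}$, so it remains only to produce a single $(G_{1})$-isomorphism of $G_{1}\times G_{2}$ with itself. The natural candidate is the map $\varphi(x,y)=(\delta^{-1}(y),\,\delta(x))$, for which $\varphi_{11}=1$, $\varphi_{12}=\delta^{-1}$, $\varphi_{21}=\delta$ and $\varphi_{22}=1$. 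I would check that it is a homomorphism through Proposition~\ref{classification0}: conditions (2) and (3) hold trivially because $G_{2}$ is abelian and $\varepsilon_{1}=\varepsilon_{2}=1$, condition (1) holds because $\delta$ and the trivial maps are homomorphisms, and condition (4) collapses to $\psi_{\delta^{-1}}=1$, which is true since $\delta^{-1}$ is a homomorphism. Bijectivity of $\varphi$ is immediate from that of $\delta$, and since $\varphi_{11}=1$ by construction, $\varphi$ is the desired $(G_{1})$-isomorphism.

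The whole argument is essentially mechanical once Proposition~\ref{main2} is in hand. The one step deserving attention is the remark that an isomorphism $\delta$ satisfying $\varepsilon_{2}\circ(\delta\times\delta)=1$ already forces $\varepsilon_{2}$ itself to be trivial; this is what collapses both sides to the direct product and makes the explicit map above transparent, while the finiteness and equal-order hypotheses serve only to promote the monomorphism of Proposition~\ref{main2} to a genuine isomorphism.
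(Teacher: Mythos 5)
Your proof is correct and follows essentially the same route as the paper: the forward direction reads off conditions (\ref{Cond1}) and (\ref{Cond2}) from Proposition~\ref{main2} and uses equal finite orders to upgrade the monomorphism $\delta$ to an isomorphism, and the converse uses the same explicit map $\varphi(x,y)=(\delta^{-1}(y),\delta(x))$. Your additional observation that surjectivity of $\delta$ forces $\varepsilon_{2}=1$, so that both sides collapse to the direct product, is correct and makes the verification more transparent, but it is a clarification rather than a different argument.
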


\begin{proof}
Indeed, using the assumptions, the only if direction comes immediately from the previous result. Conversely, let $\delta'$ be the inverse of $\delta$. Define a bijective map $\varphi$ between $G_{1}\times G_{2}$ and $G_{1}\underset{\varepsilon_{2}}{\times }G_{2}$ given by
$\varphi(x,y)=(\delta'(y),\delta(x))$, for all $x\in G_{1}$, $y\in G_{2}$. Since $\varepsilon_{2}\circ(\delta\times\delta)=1$, it is easy to check that $\varphi$ is a
group homomorphism, and therefore it is a group isomorphism.
\end{proof}

\begin{cor}
 The groups $G_{1}\underset{\varepsilon_{1}}{\times }G_{2}$ and $G_{1}\underset{\varepsilon_{2}}{\times }G_{2}$ are $(G_{1},G_{2})$-isomorphic if and only if $\varepsilon_{1}=1$ and there exists an isomorphism $\delta:G_{1}\rightarrow G_{2}$ such that $\varepsilon_{2}\circ(\delta\times\delta)=1$.
\end{cor}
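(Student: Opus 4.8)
The plan is to use directly that, by definition, a $(G_1,G_2)$-isomorphism is at the same time a $(G_1)$-isomorphism and a $(G_2)$-isomorphism, so that for the matrix form $\varphi=\left(\begin{array}{cc}\varphi_{11}&\varphi_{12}\\ \varphi_{21}&\varphi_{22}\end{array}\right)$ one has both $\varphi_{11}=1$ and $\varphi_{22}=1$. The first thing I would record is that these two vanishing conditions collapse the general formula \eqref{Hom} dramatically: since $\varepsilon_2$ is normalized, $\varepsilon_2(\varphi_{21}(x),\varphi_{22}(y))=\varepsilon_2(\varphi_{21}(x),1)=1$, and we obtain the plain ``swap'' shape $\varphi(x,y)=(\varphi_{12}(y),\,\varphi_{21}(x))$ for all $x\in G_1$, $y\in G_2$. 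This explicit form is what drives the whole argument.

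For the necessity direction I would apply Proposition \ref{MAIN1} to $\varphi$ viewed as a $(G_2)$-isomorphism, taking $\sigma=\varphi_{11}$, $\delta=\varphi_{21}$ and $\eta=\varphi_{12}$. Because $\varphi_{11}=1$ we have $\psi_\sigma=1$, so the first displayed condition of that proposition gives at once $\varepsilon_2\circ(\delta\times\delta)=1$ together with $Im(\varepsilon_1)\leq Ker(\delta)$, while $\delta=\varphi_{21}$ is already known to be an epimorphism $G_1\to G_2$. It then remains only to upgrade $\delta$ to an isomorphism and to force $\varepsilon_1=1$, and both follow from a single injectivity observation: restricting the swap formula yields $\varphi(x,1)=(1,\delta(x))$, so injectivity of $\varphi$ forces injectivity of $\delta$. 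Hence $\delta$ is an isomorphism, $Ker(\delta)=\{1\}$, and the inclusion $Im(\varepsilon_1)\leq Ker(\delta)$ forces $\varepsilon_1=1$. This delivers exactly the stated data.

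For the sufficiency direction, assuming $\varepsilon_1=1$ and an isomorphism $\delta:G_1\to G_2$ with $\varepsilon_2\circ(\delta\times\delta)=1$, I would define $\varphi(x,y)=(\delta^{-1}(y),\,\delta(x))$ as a map from $G_1\times G_2=G_1\underset{\varepsilon_1}{\times}G_2$ to $G_1\underset{\varepsilon_2}{\times}G_2$. Evaluating $\varphi(x,1)=(1,\delta(x))$ and $\varphi(1,y)=(\delta^{-1}(y),1)$ shows $\varphi_{11}=1$ and $\varphi_{22}=1$, and bijectivity is immediate since $\delta$ and $\delta^{-1}$ are bijections and $\varphi$ merely swaps the coordinates. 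The homomorphism property is where the cocycle hypothesis is essential: expanding $\varphi(x,y)\underset{\varepsilon_2}{\bullet}\varphi(x',y')$ produces the twist term $\varepsilon_2(\delta(x),\delta(x'))$, which is trivial precisely because $\varepsilon_2\circ(\delta\times\delta)=1$, leaving $(\delta^{-1}(yy'),\delta(xx'))=\varphi(xx',yy')$ as required.

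I expect the only delicate point to be a matter of bookkeeping rather than a genuine obstacle: one must check that invoking Proposition \ref{MAIN1} is legitimate here even though its sibling Proposition \ref{main2} carries the hypothesis that $(\sim)$ is trivial on $Z^2(G_2,G_2)$. The saving observation is that with $\varphi_{22}=1$ the derivation of $Im(\varepsilon_1)\leq Ker(\delta)$ needs no such hypothesis, since the relevant quantity $\varphi_{22}(y)\varphi_{22}(y')\varphi_{22}(yy')^{-1}$ is identically $1$ and no appeal to the triviality of $(\sim)$ is made. Thus the necessity direction can be carried out through Proposition \ref{MAIN1} and the elementary injectivity argument alone, so that the corollary holds in full generality as stated.
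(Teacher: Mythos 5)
Your proof is correct, and for the converse it coincides with the paper's (the explicit swap map $\varphi(x,y)=(\delta^{-1}(y),\delta(x))$, checked to be a homomorphism via $\varepsilon_{2}\circ(\delta\times\delta)=1$ and $\varepsilon_{1}=1$). For the necessity direction you diverge from the paper in a useful way: the paper obtains it ``by combining Propositions \ref{MAIN1} and \ref{main2}'', extracting injectivity of $\delta$, the identity $\varepsilon_{2}\circ(\delta\times\delta)=1$ and $\varepsilon_{1}=1$ from Proposition \ref{main2}, whereas you use only Proposition \ref{MAIN1} (with $\sigma=\varphi_{11}=1$, so $\psi_{\sigma}=1$ gives $\varepsilon_{2}\circ(\delta\times\delta)=1$ and $Im(\varepsilon_{1})\leq Ker(\delta)$) and then get injectivity of $\delta$ elementarily from the collapsed form $\varphi(x,1)=(1,\delta(x))$, after which $Ker(\delta)=1$ forces $\varepsilon_{1}=1$. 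The payoff of your route is real: Proposition \ref{main2} carries the hypothesis that $(\sim)$ is trivial on $Z^{2}(G_{2},G_{2})$, which the corollary does not state, so the paper's one-line proof formally imports an unstated assumption unless one checks (as you essentially do) that the only place that hypothesis enters --- forcing $\varphi_{22}(y)\varphi_{22}(y')\varphi_{22}(yy')^{-1}=\varphi_{21}(\varepsilon_{1}(y,y'))$ to be trivial rather than merely cohomologous to trivial --- is vacuous when $\varphi_{22}=1$. Your argument establishes the corollary exactly as stated, without that detour; the only imprecision is one of framing, since the hypothesis in question is attached to Proposition \ref{main2} (which you avoid), not to Proposition \ref{MAIN1} (which is stated without it, its proof needing the triviality of $(\sim)$ only vacuously for the same reason).
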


\begin{proof}
Indeed, by combining Propositions \ref{MAIN1} and \ref{main2}, we get directly the only if direction. The proof of the converse is similar to that of the previous corollary and then it is omitted.
\end{proof}

\bigskip

{\footnotesize \pn{\bf Noureddine Snanou }\; \\   {Department of Mathematics}, {Faculty of Sciences Dhar El Mahraz}, {Sidi Mohamed Ben Abdellah University}, {Fez, Morocco}\\ {\tt Email: noureddine.snanou@usmba.ac.ma}}\


\begin{thebibliography}{20}

\bibitem{BRO82} K. S. Brown,  Cohomology of Groups, Springer GTM, Vol. {\bf87}, Springer-Verlag, New York, Heidelberg, Berlin, 1982.

\bibitem{D-F04} D. S. Dummit, R. M. Foote,  Abstract Algebra (3rd ed.), New York: Wiley, (2004).

\bibitem{EM42} S. Eilenberg, S. MacLane, Group extensions and homology, {\it Ann. of Math.}, Second Series, {\bf43}(4) (1942) 757--831.

\bibitem{EM47} S. Eilenberg, S. MacLane, Cohomology Theory in Abstract Groups I, {\it Ann. of Math.}, Second Series, {\bf48}(1) (1947) 51--78.

\bibitem{EM47II} S. Eilenberg, S. MacLane, Cohomology Theory in Abstract Groups II, {\it Ann. of Math.}, Second Series, {\bf48}(1) (1947) 326--341.

\bibitem{Mac63} S. MacLane, {\it Homology}, Springer Grundlehren, Vol. {\bf114}, Springer-Verlag BerlinG\"{o}ttingen-Heidelberg (1963).

\bibitem{ROT95} J. J. Rotman, {\it An Introduction to the Theory of Groups}, 4th ed., Springer-Verlag, New York, 1995.

\bibitem{S-C20} N. Snanou and M. E. Charkani, On non-split abelian extensions, {\it Bull. Iran. Math. Soc.} {\bf47}(3) (2021) 743--753.

\bibitem{Sn20} N. Snanou, On non-split abelian extensions II, {\it Asian-Eur. J. Math.} {\bf14}(9) (2021) 2150164.

\bibitem{Sn23} N. Snanou, On the isomorphism problem for central extensions I, to appear in Proceedings of the Jangjeon Mathematical Society.

\bibitem{Sch26} O. Schreier, \"{U}ber die Erweiterung von Gruppen I, {\it Monatsh. Math. Phys.} {\bf34} (1926) 165--180.

\bibitem{W94} C. Weibel, An Introduction to Homological Algebra, Cambridge University Press, (1994).
\end{thebibliography}
\end{document}